\documentclass[11pt,reqno]{amsart}
\usepackage{amssymb,amscd,amsbsy,mathrsfs}
\usepackage{mathdots}
\renewcommand{\a}{\alpha}
\renewcommand{\b}{\beta}

\newcommand{\e}{\varepsilon}
\newcommand{\vk}{\varkappa}
\newcommand{\z}{\zeta}

\newcommand{\vt}{\vartheta}
\renewcommand{\l}{\lambda}

\newcommand{\s}{\sigma}

\newcommand{\f}{\varphi}

\newcommand{\h}{{\mathscr H}}

\newcommand{\C}{{\Bbb C}}
\newcommand{\T}{{\Bbb T}}

\newcommand{\dd}{{\Bbb D}}
\newcommand{\R}{{\Bbb R}}
\newcommand{\Z}{{\Bbb Z}}

\newcommand{\0}{{\boldsymbol{0}}}

\newcommand{\bs}{\boldsymbol}

\newcommand{\m}{{\boldsymbol m}}
\newcommand{\bS}{{\boldsymbol S}}

\newcommand{\rf}[1]{(\ref{#1})}

\newcommand{\df}{\stackrel{\mathrm{def}}{=}}

\newcommand{\Ker}{\operatorname{Ker}}
\newcommand{\re}{\operatorname{Re}}

\newcommand{\supp}{\operatorname{supp}}
\newcommand{\clos}{\operatorname{clos}}

\newcommand{\trace}{\operatorname{trace}}

\newcommand{\const}{\operatorname{const}}
\newcommand{\tr}{\operatorname{trace}}
\newcommand{\eeq}{\end{equation}}
\newcommand{\beq}{\begin{equation}}
\newcommand{\bay}{\begin{eqnarray}}
\newcommand{\ba}{\begin{align*}}
\newcommand{\ea}{\end{align*}}
\newcommand{\ey}{\end{eqnarray}}
\newcommand{\bey}{\begin{eqnarray*}}
\newcommand{\eey}{\end{eqnarray*}}

\newcommand{\be}{\infty}

\newcommand{\bl}{\blacksquare}

\newcommand{\Range}{\operatorname{Range}}

\newcommand{\Pf}{{\bf Proof. }}
\newcommand{\im}{\operatorname{Im}}
\renewcommand{\re}{\operatorname{Re}}

\newtheorem{thm}{\hspace{\parindent}Theorem}[section]

\newtheorem{cor}[thm]{\hspace{\parindent}Corollary}
\newtheorem{lem}[thm]{\hspace{\parindent}Lemma}

\usepackage{color}

\theoremstyle{remark}

\newtheorem*{rem*}{Remark}

\newcommand\CA{{\rm C}_{\rm A}}
\newcommand{\OL}{{\rm OL}}

\newcommand{\ri}{{\rm i}}
\newcommand{\ran}{\rm ran}
\newcommand{\dom}{{\rm dom}}

\newcommand{\rd}{{\rm d}}

\begin{document}

\newcommand{\vse}{\vspace{.2in}}
\numberwithin{equation}{section}

\title{Real spectral shift functions for pairs of contractions and pairs of dissipative operators}
\author{M.M. Malamud, H. Neidhardt and V.V. Peller}
\thanks{The research on \S\:3-5  is supported by
Russian Science Foundation [grant number 23-11-00153]. }

%\author
%\thanks{Corresponding author: V.V. Peller; email: peller@math.msu.edu}

\maketitle

\hfill to the blessed memory of Heinz Langer

\

\begin{abstract}
Recently the authors
solved a long-standing problem and showed that for an arbitrary pair of contractions on Hilbert space with trace class difference has an integrable spectral shift function on the unit circle $\T$ and an analogue of the Lifshits--Krein trace formula holds. It is also known that it may happen that there is no real-values integrable spectral shift function.
In this paper we find conditions under which a pair of contractions with trace class difference has {\it a real-valued integrable} spectral shift function. We also consider a similar problem for pairs of dissipative operators.
Finally, we find an application of the results in question to dissipative Schr\"odinger operators.
\end{abstract}

%\maketitle

%\footnotesize
%\tableofcontents
%\normalsize
%\vspace*{-1cm}

\setcounter{section}{0}
\section{\bf Introduction}
\setcounter{equation}{0}
\label{In}

\medskip

The purpose of this paper is to find conditions, under which a pair of contractions  with trace class difference possesses a real-valued integrable spectral shift function. We also study the problem to find conditions for the existence of a real-valued spectral shift function for a pair of dissipative operators with trace class difference.

It was physicist I.M. Lifshits who introduced in \cite{L} the notion of spectral shift function for pairs of self-adjoint operators with trace class difference and offered a trace formula. In that paper he associated with a pair of self-adjoint operators $A_0$ and $A_1$, whose difference is of trace class\footnote{We refer the reader to the book \cite{GK} for an introduction to the trace class $\bS_1$ and Schatten--von Neumann classes $\bS_p$.}, a real-valued integrable function
$\bs{\xi}=\bs{\xi}_{A_0,A_1}$ called the {\it spectral shift function for the pair $\{A_0,A_1\}$} and offered the trace formula
\bay
\label{foslsso}
\trace\big(f(A_1)-f(A_0)\big)=\int_\R f'(t)\bs{\xi}_{A_0,A_1}(t)\,\rd t
\ey
for sufficiently nice functions $f$ on the real line $\R$. It turned out, however, that mathematically the reasonings in \cite{L} were not mathematically rigorous.

Later M.G. Krein in \cite{Kr} offered a mathematically rigorous approach and considered the most general situation. He proved that for an arbitrary pair of self-adjoint operators $\{A_0,A_1\}$ with trace class difference that exists a {\it unique} function $\bs{\xi}=\bs{\xi}_{A_0,A_1}$ in $L^1(\R)$ such that trace formula \rf{foslsso}
holds for sufficiently nice functions $f$. Moreover, $\bs\xi$ must be {\it real-valued}.
We are going to call formula \rf{foslsso} the {\it Lifshits--Krein trace formula}.

In the same paper \cite{Kr} Krein showed that
 $$
 \bs{\xi}_{A_1,A_0}(t) = \frac{1}{\pi}\lim_{y\downarrow 0}\im (\log(\Delta_{A_1/A_0}(t +\ri y)))
\quad \mbox{for a.e.} \quad  t \in \R,
$$
where  the {\it perturbation determinant} $\Delta_{A_1/A_0}$ is defined by
$$
\Delta_{A_1/A_0}(\z)\df\det(I + (A_1-A_0)(A_0-\z I)^{-1}),\quad\z\not\in\R,
$$
(see also \cite{Ya}, Ch. 8 and \cite{Sch}, Ch. 9; we also mentioned here the surveys \cite{BP} and \cite{BY} on spectral shift function).

Later Krein extended in \cite{K64} formula \eqref{foslsso} to the class $\mathcal W_1(\R)$  of functions whose derivative is the Fourier transform of a complex Borel measure.

Krein also observed in \cite{Kr} that the right-hand side of \rf{foslsso} makes sense for arbitrary Lipschitz functions $f$ and posed the problem to describe the class of functions, for which formula \rf{foslsso} holds for all pairs of self-adjoint operators with difference of trace class $\bS_1$.

However, Farforovskaya proved in \cite{F} that trace formula \rf{foslsso} cannot be generalized to the class of all Lipschitz functions. Indeed, it was shown in \cite{F} that there exist a Lipschitz function $f$ on $\R$ and self-adjoint operators $A_1$ and $A_0$ such that $A_1-A_0\in\bS_1$, but $f(A_1)-f(A_0)\not\in\bS_1$.

Later it was shown in \cite{Pe1} and \cite{Pe3} that \rf{foslsso} holds for functions $f$ in the (homogeneous) Besov space $B_{\be,1}^1(\R)$. It was also shown in \cite{Pe1} and \cite{Pe3} that if trace formula \rf{foslsso} holds for arbitrary pairs of self-adjoint operators with trace class difference, then $f$ locally belongs to the Besov space $B_{1,1}^1$.

Krein's problem was completely resolved in \cite{Pe6} where it was shown that trace formula \rf{foslsso} holds for arbitrary pairs $(A_0,A_1)$ of not necessarily bounded self-adjoint operators with trace class difference if and only if $f$ is an {\it operator Lipschitz function}\footnote{We refer the reader to the survey \cite{AP} for detailed information on operator Lipschitz functions}, i.e., the inequality $$
\|f(A)-f(B)\|\le\const\|A-B\|
$$
holds for arbitrary self-adjoint operators $A$ and $B$.

Further development of Lifshits--Krein formula \rf{foslsso} for pairs of self-adjoint operators was
proposed by Alpay and Gohberg in \cite{AG1} and \cite{AG2}. They proposed different formulae for the spectral
shift function (see also the paper [MN] in this connection). They investigated in details the
case of finite rank perturbations.

In \cite{Kr2} Krein introduced the notion of spectral shift function for pairs of unitary operators with trace class difference. Namely, for a pair of unitary operators $(U_1,U_0)$ with trace class difference, he proved that there exists a {\it real-valued} function $\bs{\xi}$ in $L^1(\T)$, unique modulo an additive constant, (called a {\it spectral shift function for} $(U_0,U_1)$ ) such that the trace formula
\bay
\label{fosuo}
\trace\big(f(U_1)-f(U_0)\big)=\int_\T f'(\z)\bs{\xi}(\z)\,\rd\z
\ey
holds for functions $f$ whose derivative $f'$ has absolutely convergent Fourier series.

An analog of the result of \cite{Pe6} for unitary operators was obtained in
 \cite{AP+}. Namely, it was shown in \cite{AP+} that
the class of functions $f$, for which formula \rf{fosuo} holds for arbitrary pairs $\{U_1,U_0\}$ of unitary operators with $U_1-U_0\in\bS_1$ coincides with the class $\OL_\T$ of operator Lipschitz functions on $\T$.

The problem to find analogs of the above trace formulae for functions of contractions and maximal dissipative operators has been under consideration since long ago.

Recall that an operator $\T$ on Hilbert space is called a {\it contraction} if $\|T\|\le1$.
A densely defined operator $L$ is called {\it dissipative} if $\im(Lx,x)\ge0$ for an arbitrary vector $x$ in the domain ${\rm dom}(L)$ of the operator $L$. A dissipative operator $L$ is called maximal dissipative if it has no proper dissipative extension.

%In this paper we consider the case of {\it contractions}, i.e., operators $T$ on Hilbert space such that $\|T\|\le1$.

Recall that the Sz.-Nagy--Foia\c s functional calculus (see \cite{SNF})
associates with each function $f$ in the {\it disk-algebra} $\CA$ (i.e., the space of functions analytic in the disk $\dd$ and continuous in its closure) for a contraction $T$ on Hilbert space establishes the linear and multiplicative functional calculus
$$
f\mapsto f(T),\quad f\in\CA,
$$
that satisfies the von Neumann inequality
$$
\|f(T)\|\le\|f\|_{\CA}\df\max\{|f(\z)|:~|\z|\le1\},\quad f\in\CA.
$$

There were many attempts to generalize the Lifshits--Krein trace formula to the case of functions of contractions. In other words, the problem is for a pair of contractions $(T_0,T_1)$ with trace class difference, to establish the existence of an integrable function $\bs\xi%=\bs\xi_{\{T_1,T_0\}}
$
on $\T$ such that
\bay
\label{forsleszha}
\trace\big(f(T_1)-f(T_0)\big)=\int_\T f'(\z)\bs{\xi}(\z)\,\rd\z
\ey
for sufficiently nice functions $f$. Another related problem was to describe the maximal class of functions
$f$, for which trace formula \rf{forsleszha} holds for arbitrary pairs $(T_0,T_1)$ of contractions with trace class difference.

Similar problems for pairs of maximal dissipative operators also remained open for a long time.

The first result obtained for nonself-ajoint and nonunitary operators was obtained by Heinz Langer \cite{La} who considered the case of contractions whose spectra are subsets of the open unit disc $\dd$ that includes the case of strict contractions
and proved in this case the existence of an integrable spectral shift function. Actually, he considered in \cite{La} even a more general situation that involves operators that are not necessarily contractions.

We would also like to mention partial results by Rybkin \cite{Ryb84,Ryb87,Ryb89,Ryb94},
 Adamyan  and Neidhardt \cite{AN} and M.G. Krein \cite{Kr87}. Note here that in his papers \cite{Ryb87,Ryb89,Ryb94} Rybkin under additional assumptions established the existence of
 a complex-valued spectral shift function that is A-integrable but not necessarily
Lebesgue integrable.

We also mention here that in \cite{AN} the existence of a real-valued integrable spectral shift function
for a pair of contractions $(T_0,T_1)$ was established under the stronger assumption than $T_1-T_0\in\bS_1$:
\bay
\label{logarifmy}
\sum_{k\ge0}s_k(T_1-T_0)\log\big(1+(s_k(T_1-T_0))^{-1}\big)<\be
\ey
(it is assumed that the function $x\mapsto x\log(1+x^{-1})$ takes value 0 for $x=0)$.
It follows easily from our Theorem \ref{veshch} in \S\;2  that condition \rf{logarifmy} is not necessary for the existence of a real-valued integrable spectral shift function.

The longstanding problem to obtain an analogue of the Lifshits--Krein trace formula for functions of contractions was solved completely in \cite{MNP1} (see the earlier paper \cite{MalNei2015}, in which such a trace formula was obtained under an additional assumption) and in \cite{MNP2} by using completely different approaches.
In \cite{MNP1} Krein's approach based on perturbation determinants was used while the paper \cite{MNP2}
develops an idea of Birman and Solomyak \cite{BS1} that uses double operator integrals.

Note also that in \cite{BS1} the authors did not obtain the existence of an integrable spectral shift function for pairs of self-adjoint operators with trace class difference. They rather established the existence of a spectral shift measure and their method did not give the absolute continuity of that measure. The crucial point in \cite{MNP2} to obtain the absolute continuity of the spectral shift measure was an application of the Sz.-Nagy--Foia\c s theorem that says that the minimal unitary dilation of a completely nonunitary contraction has absolutely continuous spectral measure.

Note that unlike the case of pairs of self-adjoint operators with trace class difference a spectral shift function for pairs of contractions with trace class difference is never unique. If $\bs\xi$ is a spectral shift function for a pair of  contractions with trace class difference, then all spectral shift functions for the same pair admits parametrization
$\{\bs{\xi}+h\}$, where $h$ ranges over the Hardy class $H^1$.

A question naturally arises of whether it is possible to select a real-valued spectral shift function. In general the answer is negative: there exist pairs $(T_0,T_1)$ of contractions with trace class difference, for which
there is no real-valued integrable spectral shift function, see \cite{MNP1}. However, as shown in \cite{MalNei2015}, in the case of a finite rank perturbation a real-valued integrable function always exists.

On the other hand, it was established in \cite{MNP2} that for an arbitrary pair $\{T_0,T_1\}$ of contractions with trace class difference, there exists a {\it real-valued} A-integrable spectral shift function in which case
in formula \rf{forsleszha} the right-hand side should be understood in the sense of A-integral.

Finally, let us mention here that the maximal class of functions $f$ for which trace formula \rf{forsleszha}
holds for arbitrary pairs $(T_0,T_1)$ of contractions with trace class difference coincides with the class of operator Lipschitz functions analytic in $\dd$, see \cite{MNP2}.

The Lifshits--Krein trace formula was also generalized in \cite{MalNei2015}, \cite{MNP1} and \cite{MNP2} to the case of pairs of maximal dissipative operators. If $L_0$ and $L_1$ are maximal dissipative operators with trace class difference, then it was shown in \cite{MNP1} and \cite{MNP2} that there exists an integrable spectral shift function $\bs\xi$ on the real line $\R$ such that the trace formula
\bay
\label{fcddo}
\trace\big(f(L_1)-f(L_0)\big)=\int_\R f'(t)\bs{\xi}(t)\,\rd t
\ey
for rational functions $f$ with poles in the open lower half-plane.

On the other hand, if instead of the condition $L_1-L_0\in\bS_1$ we impose the resolvent condition
$$
(\ri I+L_1)^{-1}-(\ri I+L_0)^{-1}\in\bS_1,
$$
there exists a spectral shift function $\bs\xi$ on $\R$ such that
\bay
\label{s_vesom}
\int_\R\frac{|\bs{\xi}(t)|}{1+t^2}\,\rd t<\be
\ey
and trace formula \rf{fcddo} holds for rational functions $f$ with poles in the open lower half-plane,
see \cite{MNP2}.

In this paper we are going to find conditions under which a pair of contractions with trace class difference has a {\it real-valued} integrable spectral shift function. Recently Chattopadhyay and Sinha showed in \cite{CS}
that if $T_0$ and $T_1$ are contractions with trace class difference and $T_0$ is a strict contraction, i.e.,
$\|T_0\|<1$, then the pair $\{T_0,T_1\}$ possesses a real-valued integrable spectral shift function.

We are going to essentially improve this result in \S\:\ref{szhali} and prove the existence of a real-valued integrable spectral shift function under much more relaxed assumptions.

It is also appropriate to mention here that it was shown in \cite{MNP2} that if $T$ is a contraction and $U$ is a unitary operator such that $U-I\in\bS_1$, then the pair $(T,UT)$ has a real-valued integrable spectral shift function (see Lemma 9.1 in \cite{MNP2}). On the other hand, if $T$ is a contraction and $X$ is a contraction such that $X\ge\0$ and $I-X\in\bS_1$, then the pair $(T,XT)$ has a purely imaginary integrable spectral shuft function (see Lemma 9.2 in \cite{MNP2}).

In \S\:\ref{dissipatsiya} of this paper we are going to work on conditions for the existence of a real-valued spectral shift function for pairs $(L_0,L_1)$ of maximal dissipative operators with trace class resolvent difference.  We find new sufficient conditions under which the pair $(L_0,L_1)$ has a real-valued spectral shift function $\bs\xi$ satisfying \rf{s_vesom} and complement certain results in this direction obtained in \cite{MalNei2015},
Sec. 5 and \cite{MNP2}, Sec. 9. 
However, it turns out that under our condition, the pair $(L_0,L_1)$ cannot possess a real-valued {\it integrable} spectral shift function unless $\trace(L_1-L_0)\in\R$.

In this paper we improve the results of our note \cite{MNP3} where certain preliminary results were announced.

We devote this paper to  Heinz Langer remarkable mathematician and personality. The first author highly appreciates his support for several years of his mathematical life.

\

\section{\bf Real-valued integrable spectral shift functions for pairs of contractions}
\setcounter{equation}{0}
\label{szhali}

\

In this section we obtain a condition on a pair  $(T_0,T_1)$ of contractions with trace class difference, under which this pair has a real-valued integrable spectral shift function. To establish the main result of this section, we prove Theorem \ref{Fyokla}. Obviously, if we know that $X$ and $Y$ are operators on Hilbert space satisfying the conditions $\0 \le X\le I$ and $\0 \le Y\le I$ and $Y - X \in \bS_1$, then for $\s\in(0,1)$, the operator
$Y^\s - X^\s$ does not have to belong to $\bS_1$. However, Theorem \ref{Fyokla} shows that if we impose a certain additional assumption, we can make the conclusion that $Y^\s - X^\s\in\bS_1$.

We are going to introduce the following piece of notation. Suppose that $Q$ is a densely defined linear operator on Hilbert space that extends by continuity. We are going to use the the same notation $Q$ for the unique extension of $Q$ to a bounded linear operator.

\begin{thm}
\label{Fyokla}
Let $p>0$ and $\s\in (0,1)$.
Suppose that $X$ and $Y$ are operators on Hilbert space such that $\0 \le X\le I$, $\0 \le Y\le I$,
$\Ker X = \{\0\}$ and $\Ker Y = \{\0\}$. If $Y - X \in \bS_p$ and the operator
$Y^{-\b}(Y-X)X^{-\a}$ is densely defined and extends to an operator  of class $\bS_p$ for some nonnegative
$\a$ and $\b$ such that
$\a+\b\in(1-\s,1]$, then $Y^{\s} - X^{\s} \in \bS_p$.
\end{thm}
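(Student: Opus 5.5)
We would prove Theorem \ref{Fyokla} with the integral representation of fractional powers,
$$
A^\s=\frac{\sin\pi\s}{\pi}\int_0^\be t^{\s-1}A(A+tI)^{-1}\,\rd t,\qquad \0\le A\le I .
$$
Using $A(A+t)^{-1}=I-t(A+t)^{-1}$ and the resolvent identity, this gives
$$
Y^\s-X^\s=\frac{\sin\pi\s}{\pi}\int_0^\be t^{\s}(Y+tI)^{-1}(Y-X)(X+tI)^{-1}\,\rd t .
$$
The plan is to show that this integral converges in $\bS_p$. To handle $p<1$ as well, where $\bS_p$ is only a quasi-Banach space, we split the range of integration dyadically and use the $p$-triangle inequality $\|\sum A_k\|_{\bS_p}^p\le\sum\|A_k\|_{\bS_p}^p$ in place of Bochner integration.

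\textbf{Large $t$.} For $t\ge1$ we use only $Y-X\in\bS_p$ and the bound $\|(Y+t)^{-1}\|\le t^{-1}$. Write $X=\int_0^1\l\,\rd E_X(\l)$ and $Y=\int_0^1\mu\,\rd E_Y(\mu)$. Then the tail $\int_1^\be$ is the double operator integral
$$
\iint\Phi_1(\mu,\l)\,\rd E_Y(\mu)\,(Y-X)\,\rd E_X(\l),\qquad \Phi_1(\mu,\l)=\int_1^\be\frac{t^\s}{(\mu+t)(\l+t)}\,\rd t .
$$
On $[0,1]^2$ the function $\Phi_1$ is real analytic, so it is a Schur multiplier of every $\bS_p$, including $p<1$. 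Hence this part belongs to $\bS_p$.

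\textbf{Small $t$.} On $(0,1)$ we insert the weights: $Y-X=Y^{\b}KX^{\a}$ with $K\in\bS_p$ (the closure of $Y^{-\b}(Y-X)X^{-\a}$). The small-$t$ part then becomes
$$
\iint\Phi_0(\mu,\l)\,\rd E_Y(\mu)\,K\,\rd E_X(\l),\qquad \Phi_0(\mu,\l)=\mu^\b\l^\a\int_0^1\frac{t^\s}{(\mu+t)(\l+t)}\,\rd t .
$$
Scaling $t\approx\mu$ or $t\approx\l$ shows $|\Phi_0|\lesssim\max(\mu,\l)^{\a+\b+\s-1}$. This is bounded exactly when $\a+\b\ge 1-\s$, which is why the hypothesis appears. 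To obtain a Schur multiplier we decompose dyadically. Take $t\in[2^{-k-1},2^{-k}]$ and cut $\mu,\l$ into dyadic blocks $P_j=E_Y([2^{-j-1},2^{-j}))$ and $Q_i=E_X([2^{-i-1},2^{-i}))$. On each block triple the integrand is a product $a(\mu)b(\l)$ of smooth functions with size about
$$
2^{-k(\s+1)}\,\frac{2^{-j\b}}{2^{-j}+2^{-k}}\,\frac{2^{-i\a}}{2^{-i}+2^{-k}} .
$$
The block term is bounded in $\bS_p$ by this size times $\|P_jKQ_i\|_{\bS_p}$. Summing over $k$ gives roughly $\max(2^{-i},2^{-j})^{\a+\b+\s-1}\,2^{-|i-j|\gamma}$ for some $\gamma>0$. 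For that $\gamma$ we need $\a+\b\le1$ to control the off-diagonal decay; with $\a+\b>1$ the factor $2^{-j\b}/2^{-j}$ misbehaves. Then we sum $\sum_{i,j}(\ldots)^p\|P_jKQ_i\|^p_{\bS_p}$. The bound
$$
\sum_{i,j}\|P_jKQ_i\|_{\bS_p}^p\le\|K\|_{\bS_p}^p\qquad(p\le2)
$$
is not generally true, so we would instead use a $\bS_p$-valued Schur test on the block matrix with geometrically decaying off-diagonal weights. This test works for all $p$, with $p<1$ handled through the $p$-triangle inequality and $\|P_jKQ_i\|\le\|K\|$ for each block.

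\textbf{Main obstacle.} The hard step is making this block-summation rigorous in $\bS_p$ for all $p>0$. One also has to justify that the densely defined product, once closed, reproduces $Y-X$ inside the double operator integral. For the latter we use the injectivity of $X$ and $Y$ together with the density of $\bigcup_i\Range E_X([2^{-i},1])$. The strict inequality $\a+\b>1-\s$ supplies the summable factor $2^{-\min(i,j)\e}$ with $\e=\a+\b+\s-1>0$. We expect this factor, together with the off-diagonal decay coming from $\a+\b\le1$, to make the series converge.
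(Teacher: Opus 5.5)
Your starting point is the paper's: the formula $Y^\s-X^\s=c_\s\int_0^\be t^\s(tI+Y)^{-1}(Y-X)(tI+X)^{-1}\,\rd t$, the split at $t=1$, and inserting $K=\clos(Y^{-\b}(Y-X)X^{-\a})$ on $(0,1)$. The paper then finishes with operator-norm bounds. For $t\in(0,1]$ it uses $\|(tI+Y)^{-1}Y^\b\|\le t^{\b-1}$ and $\|X^\a(tI+X)^{-1}\|\le t^{\a-1}$; this is the only place $\a+\b\le1$ enters, through $\a,\b\le1$. So the integrand has $\bS_p$-norm at most $t^{\s+\a+\b-2}\|K\|_{\bS_p}$, which is integrable iff $\a+\b>1-\s$. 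The tail is bounded by $t^{\s-2}\|Y-X\|_{\bS_p}$. For $p\ge1$, the only case used later, that is the whole proof, and none of your double-operator-integral machinery is needed. You are right that this triangle inequality for integrals is unavailable in the quasi-Banach space $\bS_p$, $p<1$. But your replacement has a concrete hole.

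The off-diagonal decay of your block weights comes from the factors $\mu^\b$ (when $\mu\ll\l$) and $\l^\a$ (when $\l\ll\mu$). Summing over $k$ gives $w_{ij}\asymp2^{-\e i}2^{-\b(j-i)}$ for $j\ge i$ and $w_{ij}\asymp2^{-\e j}2^{-\a(i-j)}$ for $i\ge j$. So $\gamma=\min(\a,\b)$, which has nothing to do with $\a+\b\le1$.

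The theorem allows $\a=0$ or $\b=0$, and the paper uses $\b=0$ in Corollary \ref{cor:3.8}. If $\b=0$, then $w_{ij}\asymp2^{-\e i}$ for all $j>i$, so $\sum_{i,j}w_{ij}^p=\be$. Since $\sum_j\|P_jKQ_i\|_{\bS_p}^p$ may be infinite for $p<2$, no block-by-block estimate can close the argument. You must treat the whole column $\{\mu<2^{-i-3}\}\times\{\l\sim2^{-i}\}$ at once:
\begin{itemize}
\item for $t<2^{-i-2}$, expand $(\l+t)^{-1}$ in powers of $t/\l$;
\item for $t>2^{-i-2}$, expand $(\mu+t)^{-1}$ in powers of $\mu/t$.
\end{itemize}
This writes the column multiplier as $\sum_n a_n(\mu)b_n(\l)$ with $\|a_n\|_\be\|b_n\|_\be\lesssim2^{-\e i}2^{-n}$, and that sums fine.

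Conversely, when $\a,\b>0$ your ``main obstacle'' is not an obstacle. There $\sum_{i,j}w_{ij}^p<\be$, so the $p$-triangle inequality with $\|P_jKQ_i\|_{\bS_p}\le\|K\|_{\bS_p}$ already suffices, with no Schur test. You still have to expand the $t$-integrated block function into products, e.g.\ by power series in $2^j\mu$ and $2^i\l$, since for $p<1$ an integral of products is not automatically controlled.

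Finally, the identity $Y-X=Y^\b KX^\a$, which the paper also uses tacitly, does not follow from density of $\bigcup_i\Range E_X([2^{-i},1])$. Those subspaces need not lie in the natural domain, which also requires $(Y-X)X^{-\a}f\in\Range Y^\b$. For example, let $Y-X=(\cdot,u)u$ with $u\notin\Range X^\a$ and $u\notin\Range Y^\b$; this can happen for suitable $X$ when $\a+\b=1$. Then the domain is $X^\a(\{u\}^\perp)$, which is dense, and the operator vanishes there, so $K=\0$ although $Y\ne X$. What is needed is that the domain contain $\Range X^\a$, i.e.\ $(Y-X)\h\subseteq\Range Y^\b$. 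Under that condition $Y^\b KX^\a=Y-X$ on all of $\h$.
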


\Pf
We use the notation $\clos(Y^{-\b}(Y-X)X^{-\a})$ for the closure of the operator $Y^{-\b}(Y-X)X^{-\a}$, which belongs to $\bS_p$ by the assumption.
%%It is easy to deduce from
The following elementary identity is well known  (see e.g., \cite{BS2}, Ch. 10, Sect. 4).
$$
   c_\s \int_0^\be\frac{t^{\s-1}}{t+s}\,\rd t = s^{\s - 1}, \quad c_\s = \frac{\sin\pi\s}{\pi}, \quad s>0,\quad  0< \s <1,
$$
It follows easily and it is well known (see e.g. Bir-Sol, Ch.10) that
$$
Y^{\s} = c_\s\int^\infty_0t^{\s - 1}Y(tI + Y)^{-1}\rd t
\quad\mbox{and} \quad
X^{\s} =c_\s\int^\infty_0t^{\s - 1}X(tI+X)^{-1}\rd t.
$$
It is easy to verify that
$$
(tI + Y)\big(Y(tI + Y)^{-1}-X(tI+X)^{-1}\big)(tI+X)=t(X-Y),
$$
and so
$$
Y(tI + Y)^{-1}-X(tI+X)^{-1}=t(tI + Y)^{-1}(X-Y)(tI+X)^{-1}.
$$
Thus,
\begin{align*}
Y^{\s}-X^{\s}& = c_\s\int^\infty_0t^{\s}(tI + Y)^{-1}(X-Y)(tI+X)^{-1}\,\rd t\\[.2cm]
&=  c_\s\int^1_0t^{\s}\big((tI + Y)^{-1}Y^{\b}\big) \clos\big(Y^{-\b}(Y-X)X^{-\a}\big)\big(X^\a(tI+X)^{-1}\big)
\,\rd t\\[.2cm]
& + c_\s\int^\infty_1t^{\s}(tI + Y)^{-1}(X-Y)(tI+X)^{-1}\,\rd t.
\end{align*}
Since $Y$ is non-negative, it is easy to see that
\bay
\label{pervoe}
\left\|t^{\s}(tI+Y)^{-1}\right\| \le t^{\s -1},  \qquad t\in (0,\infty).
\ey
%and
%\bay
%\label{vtoroe}
% \left\|t^{1/2}(tI+Y)^{-1}\right\| \le t^{\alpha-1}, \quad t\in [1,\infty),
%\ey
Clearly,
\begin{align}
\label{eq:first_est-te_for_Y1/2-X1/2}
\!\!\!\big\|Y^{\s} - X^{\s}\big\|_{\bS_p}
&\le  c_\s \left\|\int^1_0t^{\s}\big((tI + Y)^{-1}Y^{\b}\big)\big(Y^{-\b}(Y-X)X^{-\a}\big)\big(X^\a(tI+X)^{-1}\big)\,\rd t
\right\|_{\bS_p}\nonumber\\[.2cm]
& + c_\s\left\| \int^\infty_1t^{\s}(tI + Y)^{-1}(X-Y)(tI+X)^{-1}\,\rd t\right\|_{\bS_p}.
\end{align}

Clearly, the first term on the right of \rf{eq:first_est-te_for_Y1/2-X1/2} can be estimated
in terms of 
\begin{align}
\label{eq:first_main_estimae}
&  c_\s\left\|\int^1_0t^{\s} \big((tI + Y)^{-1}Y^{\b} \big) \big(Y^{-\b}(Y-X)X^{-\a}\big)
\big(X^\a(tI+X)^{-1}\big)\,\rd t\right\|_{\bS_p}\nonumber\\[.2cm]
&\hspace*{2cm}
\le
c_\s \int^1_0  \left\|(tI+Y)^{-1}Y^\b\right\| \left\|t^{\s}Y^{-\b}(Y-X)X^{-\a}\right\|_{\bS_p}
\left\|X^\a(tI+X)^{-1}\right\|\,\rd t.
\end{align}

 The elementary inequality
 $$
 \frac{x^\a}{t+x}\le t^{\a-1} \quad\mbox{for}\quad x\in[0,1],~t\in(0,1]
 $$
immediately implies that
 $$
 \left\|{X^\alpha}(tI +X)^{-1}\right\| \le t^{\alpha-1} \quad \text{and}\quad \left\|(tI + Y)^{-1}{Y^\beta}\right\|
 = \left\|\big({Y^\beta}(tI + Y)^{-1}\big)^*\right\|    \le t^{\beta -1},
 $$
 and so the right-hand side of \rf{eq:first_main_estimae} is less than or equal to
 $$
\big\|Y^{-\beta} (Y-X) X^{-\a}\big\|_{\bS_p} c_\s \int^1_0
t^{\s}t^{\alpha +\beta - 2}\,\rd t \le \frac{c_\s}{(\alpha + \beta + \s - 1)}
\big\|Y^{-\b}(Y-X)X^{-\a}\big\|_{\bS_p}\; .
$$

Next,
\begin{multline}
\label{eq:second_main_estimae}
c_\s \left\|\int^\infty_1t^{\s}(tI+Y)^{-1}(X-Y)(tI+X)^{-1}\rd t\right\|_{\bS_p} \\[.2cm]
\le
c_\s \int^\infty_1\big\|t^{\s}(tI+Y)^{-1}\big\|\cdot\|X-Y\|_{\bS_p}
\left\|(tI+X)^{-1}\right\|\rd t
\end{multline}
Obviously, for $t\ge1$,
$$
\big\|(tI+Y)^{-1}\big\|\le t^{-1}\quad\mbox{and}\quad\big\|(tI+X)^{-1}\big\|\le t^{-1}.
$$
Thus, the right-hand side of \rf{eq:second_main_estimae} is less than or equal to
$$
c_\s\|X-Y\|_{\bS_p}\int^\infty_1t^{\s - 2}\rd t \le \frac{c_\s}{1- \s}\|X-Y\|_{\bS_p}.
$$
Combining  inequality   \eqref{eq:first_est-te_for_Y1/2-X1/2}   with estimates \eqref{eq:first_main_estimae} and \eqref{eq:second_main_estimae},  we arrive at the conclusion   $Y^{\s} - X^{\s} \in \bS_p$. $\bl$

\begin{cor}
\label{cor:3.8}
Let $p>0$.
Suppose that
$\0 \le X\le I$ and $\0 \le Y\le I$. If $Y - X \in \bS_p$ and
$X$ is invertible, then $Y^\s-X^\s \in\bS_p$.
\end{cor}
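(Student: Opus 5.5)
The plan is to deduce the corollary from Theorem \ref{Fyokla} with the choice $\a=1$, $\b=0$ (so $\a+\b=1\in(1-\s,1]$ for every $\s\in(0,1)$). The corollary does not assume $\Ker Y=\{\0\}$, so I will first reduce to the case where $Y$ has trivial kernel, and then check the remaining hypotheses of the theorem.

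\textbf{Reduction to $\Ker Y=\{\0\}$.} For $\e\in(0,1)$ put $X_\e=(1-\e)X+\e I$ and $Y_\e=(1-\e)Y+\e I$. Then $\0\le X_\e,Y_\e\le I$, both are invertible, and $Y_\e-X_\e=(1-\e)(Y-X)\in\bS_p$. The easier route, however, avoids any limit: Theorem \ref{Fyokla} is applied only through the factorization in its proof. In that proof the kernel assumption on $Y$ is used only to make sense of $Y^{-\b}$. With $\b=0$ the relevant operator is simply $(Y-X)X^{-1}$, and no inverse of $Y$ appears. Moreover, the integral representation
$$
Y^\s=c_\s\int_0^\be t^{\s-1}Y(tI+Y)^{-1}\,\rd t
$$
holds for every $Y\ge\0$, kernel or not. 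So I will simply rerun the argument of the theorem with $\b=0$, $\a=1$. Alternatively, if one insists on quoting the theorem verbatim, apply it to $X$ and $Y_\e$ and note that the resulting estimate is uniform in $\e$. I prefer rerunning the argument.

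\textbf{Hypotheses.} Since $X$ is invertible, $X^{-1}$ is bounded. Hence $(Y-X)X^{-1}$ is everywhere defined and lies in $\bS_p$, because $\bS_p$ is an ideal. Also $\Ker X=\{\0\}$.

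\textbf{Estimates.} Split the integral for $Y^\s-X^\s$ at $t=1$. On $(0,1)$, write the integrand as
$$
t^\s(tI+Y)^{-1}\big((Y-X)X^{-1}\big)X(tI+X)^{-1},
$$
and use $\|(tI+Y)^{-1}\|\le t^{-1}$ and $\|X(tI+X)^{-1}\|\le1$. The integrand then has $\bS_p$-(quasi)norm at most $t^{\s-1}\|(Y-X)X^{-1}\|_{\bS_p}$, which is integrable on $(0,1)$. On $(1,\be)$ the bound $t^{\s-2}\|Y-X\|_{\bS_p}$ holds exactly as in the theorem.

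\textbf{Main obstacle.} The only delicate point is $0<p<1$, where $\|\cdot\|_{\bS_p}$ is only a quasinorm, so the triangle inequality for Bochner-type integrals is not directly available. This issue is inherited from Theorem \ref{Fyokla} itself. For $p<1$, I would handle it by bounding $\|\cdot\|_{\bS_p}^p$, which is $p$-subadditive, over dyadic pieces $[2^{-k-1},2^{-k}]$ and $[2^k,2^{k+1}]$. On each piece the integrand is a product of a fixed $\bS_p$ operator with bounded operator-valued functions, so the integral over that piece lies in $\bS_p$ with norm $\lesssim 2^{-k\s}$ (respectively $2^{k(\s-1)}$). These bounds are $p$-th power summable, which yields $Y^\s-X^\s\in\bS_p$ for every $p>0$.
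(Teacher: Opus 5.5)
Your main line is the paper's. The paper notes that $X^{-\a}$ is bounded, so $(Y-X)X^{-\a}\in\bS_p$, and applies Theorem \ref{Fyokla} with $\b=0$; you take $\a=1$ and rerun the estimates, and for $p\ge1$ this is complete. You are right that the corollary does not assume $\Ker Y=\{\0\}$ while Theorem \ref{Fyokla} does; the paper passes over this. Rerunning the proof with $\b=0$, where $Y^{-\b}$ never occurs, handles it. There is a small slip in your aside: the theorem should be applied to $(X_\e,Y_\e)$, not $(X,Y_\e)$, since $Y_\e-X=(Y-X)+\e(I-Y)$ is in general not in $\bS_p$.

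Your patch for $0<p<1$ has a gap. Cutting $(0,\be)$ into dyadic pieces only localizes the difficulty. For $p<1$ it is false that $\int_a^{2a}F(t)AG(t)\,\rd t\in\bS_p$ whenever $A\in\bS_p$ is fixed and $F,G$ are bounded, even norm continuous. For example, let $\{u_n\}$ be orthonormal, $\sum_n\lambda_n=1$, $e_t=\sum_n\lambda_n^{1/2}e^{2\pi\ri nt}u_n$, and $u$ a unit vector. Then $(\cdot,e_t)e_t=F(t)\big((\cdot,u)u\big)G(t)$ with $F(t)=(\cdot,u)e_t$ and $G(t)=(\cdot,e_t)u$. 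Yet $\int_0^1(\cdot,e_t)e_t\,\rd t=\sum_n\lambda_n(\cdot,u_n)u_n$, which is not in $\bS_p$ if $\sum_n\lambda_n^p=\be$.

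What closes the gap is the analyticity of the integrand in $t$. On $[a,2a]$ put $t_0=3a/2$ and expand $(tI+Y)^{-1}=\sum_{n\ge0}(t_0-t)^n(t_0I+Y)^{-n-1}$; the $n$th term has norm at most $3^{-n}(3a/2)^{-1}$. Expand $X(tI+X)^{-1}$ the same way. The piece becomes
\[
\sum_{n,m\ge0}\Big(\int_a^{2a}t^\s(t_0-t)^{n+m}\,\rd t\Big)(t_0I+Y)^{-n-1}\,A\,X(t_0I+X)^{-m-1},\qquad A=(Y-X)X^{-1}.
\]
Each coefficient times the operator norms is at most $\const\,a^\s3^{-n-m}$. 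So $p$-subadditivity of $\|\cdot\|_{\bS_p}^p$ gives $\bS_p$-quasinorm at most $C_pa^\s\|A\|_{\bS_p}$ for the piece. On $[2^k,2^{k+1}]$ the same expansion gives $C_p2^{k(\s-1)}\|Y-X\|_{\bS_p}$, and then your dyadic summation goes through.

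The paper's proof of Theorem \ref{Fyokla} also applies the triangle inequality to an $\bS_p$-valued integral, so for $p<1$ it has the same gap. You were right to flag it, but this extra step is what actually fixes it.
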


\Pf If $X$ is invertible,
then $X^{-\alpha }$ is a bounded operator for every $\alpha \in\R$. Hence,
$(Y-X)X^{-\a}\in\bS_p$. Put $\b=0$. The result follows from Theorem \ref{Fyokla}. $\bl$

\medskip

For a contraction $T$, we consider the defect operators
$$
D_T \df(I - T^*T)^{1/2}\quad\mbox{and}\quad D_{T^*}\df(I - TT^*)^{1/2}.
$$

In what follows we agree that if $R$ is a linear operator defined on a dense subset of a Hilbert space and it extends to a bounded linear operator, we use the same notation $R$ for its unique extension to the whole space.

\begin{lem}
\label{raz def}
Let $p>0$. Suppose that
$T_0$ and $T_1$ are contractions such that
%ne nado$T- T_0 \in\bS_1$.
\bay
\label{yadro}
\Ker D_{T_0} = \{\0\}%\quad\mbox{and}\quad\Ker D_{T^*_0} = \{\0\}.
\ey
and for some  $\alpha\ge 0$, $\beta \ge 0$ satisfying  $\alpha  + \beta  \in(\tfrac{1}{2},1]$, the operators
\begin{equation}
\label{-2a}
D_{T_1^*}^{-2\b}(T_1-T_0)D_{T_0}^{-2\a}
\quad \mbox{and} \quad
D_{T_1}^{-2\b}(T_1^*-T^*_0)D_{T_0}^{-2\a}\quad  %% \mbox{for}\quad    \alpha  + \beta  \in(\tfrac{1}{2},1].
\end{equation}
are densely defined and extend to operators of class $\bS_p$.
Then
\bay
\label{defekty}
D_{T_1}-D_{T_0}\in \bS_p\quad\mbox{and}\quad D_{T_1^*} - D_{T^*_0} \in \bS_p.
\ey
\end{lem}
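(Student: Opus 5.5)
The plan is to deduce both inclusions in \rf{defekty} from Theorem \ref{Fyokla} with $\s=\tfrac12$, after replacing the pair $(\a,\b)$ there by the pair $(\a,\b)$ of the lemma. Since $\a+\b\in(\tfrac12,1]=(1-\s,1]$, the exponent condition of Theorem \ref{Fyokla} is met. For the first inclusion I take
$$
X=D_{T_0}^2=I-T_0^*T_0,\qquad Y=D_{T_1}^2=I-T_1^*T_1,
$$
so that $0\le X,Y\le I$, $X^{1/2}=D_{T_0}$, $Y^{1/2}=D_{T_1}$, and $\Ker X=\{\0\}$ by \rf{yadro}. I will need $\Ker Y=\{\0\}$ only when $\b>0$, and then I read it off from the requirement that $D_{T_1}^{-2\b}$ in \rf{-2a} makes sense. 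If $\b=0$, the factor $Y^{\b}(tI+Y)^{-1}$ in the proof of Theorem \ref{Fyokla} is just $(tI+Y)^{-1}$, so no kernel condition on $Y$ is used. For the second inclusion I use the analogous choice $X=I-T_0T_0^*$, $Y=I-T_1T_1^*$, and the argument is symmetric under the exchange $T_j\leftrightarrow T_j^*$.

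\emph{Step 1: $Y-X\in\bS_p$.} This is immediate: $Y-X=T_0^*T_0-T_1^*T_1=T_1^*(T_0-T_1)+(T_0^*-T_1^*)T_0$. Each term lies in $\bS_p$ because $T_1-T_0\in\bS_p$, which is itself a consequence of \rf{-2a}, since $D_{T_1^*}^{2\b}$ and $D_{T_0}^{2\a}$ are bounded.

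\emph{Step 2: $Y^{-\b}(Y-X)X^{-\a}$ is densely defined and extends to an operator in $\bS_p$.} I use the same decomposition. For the term $T_1^*(T_0-T_1)$, I use the intertwining relation $T_1^*\,\f(T_1T_1^*)=\f(T_1^*T_1)\,T_1^*$, which gives $D_{T_1}^{2\b}T_1^*=T_1^*D_{T_1^*}^{2\b}$. Formally this yields $D_{T_1}^{-2\b}T_1^*=T_1^*D_{T_1^*}^{-2\b}$ on the relevant domain, so
$$
D_{T_1}^{-2\b}\,T_1^*(T_0-T_1)\,D_{T_0}^{-2\a}=T_1^*\cdot D_{T_1^*}^{-2\b}(T_0-T_1)D_{T_0}^{-2\a}\in\bS_p
$$
by the first hypothesis in \rf{-2a}.

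The term $(T_0^*-T_1^*)T_0$ is the delicate one, and I expect it to be the main obstacle. The hypothesis only controls $D_{T_1}^{-2\b}(T_1^*-T_0^*)D_{T_0}^{-2\a}$, while here $T_0$ stands between the difference and $D_{T_0}^{-2\a}$. My first attempt will be to reorganize the identity so that $T_0$ appears on the left. Writing $T_0=T_1+(T_0-T_1)$ gives
$$
Y-X=T_1^*(T_0-T_1)+(T_0^*-T_1^*)T_1+(T_0^*-T_1^*)(T_0-T_1).
$$
The last term can be absorbed using the first hypothesis on its right factor together with boundedness of $D_{T_1}^{-2\b}(T_0^*-T_1^*)$ restricted to the appropriate range. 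Failing that, I will commute $T_0$ through $D_{T_0}^{-2\a}$ via $T_0D_{T_0}^{2\a}=D_{T_0^*}^{2\a}T_0$. Alternatively, I will exploit that the proof of Theorem \ref{Fyokla} is symmetric under adjoints: passing from $Y^{-\b}(Y-X)X^{-\a}$ to its adjoint $X^{-\a}(Y-X)Y^{-\b}$ lets me place the uncontrolled factor on the harmless side, and I will check that each resulting piece is one of the two operators in \rf{-2a} multiplied by bounded operators. Domain issues (dense definition, closures) will be handled as in Theorem \ref{Fyokla} by working on $\Range D_{T_0}^{2\a}$.

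\emph{Step 3: conclusion.} Once Steps 1 and 2 are in place, Theorem \ref{Fyokla} with $\s=\tfrac12$ gives $D_{T_1}-D_{T_0}=Y^{1/2}-X^{1/2}\in\bS_p$. Repeating the argument with $T_j$ replaced by $T_j^*$ gives $D_{T_1^*}-D_{T_0^*}\in\bS_p$. In that case the roles of the two operators in \rf{-2a} are interchanged, which is why the lemma assumes both.
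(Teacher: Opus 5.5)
Your frame is the paper's: Theorem~\ref{Fyokla} with $\s=\tfrac12$, $X=D_{T_0}^2$, $Y=D_{T_1}^2$, the splitting $Y-X=T_1^*(T_0-T_1)+(T_0^*-T_1^*)T_0$, and the first term handled via $D_{T_1}^{-2\b}T_1^*=T_1^*D_{T_1^*}^{-2\b}$. However, Step~2 is not carried out for the second term. You list three routes and complete none of them.

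\begin{itemize}
\item The first route cannot work. Writing $T_0=T_1+(T_0-T_1)$ produces $D_{T_1}^{-2\b}(T_0^*-T_1^*)T_1D_{T_0}^{-2\a}$, where $T_1$ sits against $D_{T_0}^{-2\a}$ and no intertwining relation is available. Your remark about absorbing the quadratic term does not touch this middle term.
\item The adjoint route buys nothing. With the splitting $Y-X=T_0^*(T_0-T_1)+(T_0^*-T_1^*)T_1$ and the relations $D_{T_0}^{-2\a}T_0^*=T_0^*D_{T_0^*}^{-2\a}$, $T_1D_{T_1}^{-2\b}=D_{T_1^*}^{-2\b}T_1$, it only produces the adjoints of the same two operators the direct route needs.
\item The route that works is your second one, which is what the paper does. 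From $T_0D_{T_0}^{2\a}=D_{T_0^*}^{2\a}T_0$ you get $D_{T_0^*}^{-2\a}T_0f=T_0D_{T_0}^{-2\a}f$ for $f\in\Range D_{T_0}^{2\a}$. Hence, on this dense set,
\[
D_{T_1}^{-2\b}(T_0^*-T_1^*)T_0D_{T_0}^{-2\a}f=\big(D_{T_1}^{-2\b}(T_0^*-T_1^*)D_{T_0^*}^{-2\a}\big)T_0f .
\]
\end{itemize}

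The point you have to confront is that the operator in parentheses carries $D_{T_0^*}^{-2\a}$. It is therefore \emph{not} the second operator in \rf{-2a} as printed, and under the literal hypothesis none of your three routes closes Step~2. The paper's proof uses exactly $D_{T_1}^{-2\b}(T_1^*-T_0^*)D_{T_0^*}^{-2\a}\in\bS_p$. This is also what is checked when the lemma is applied in the proof of Theorem~\ref{imL0}. So the second operator in \rf{-2a} must be read with $D_{T_0^*}^{-2\a}$.

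Your Step~3 already presupposes that reading. The substitution $T_j\mapsto T_j^*$ sends $D_{T_1^*}^{-2\b}(T_1-T_0)D_{T_0}^{-2\a}$ to $D_{T_1}^{-2\b}(T_1^*-T_0^*)D_{T_0^*}^{-2\a}$. The two hypotheses are interchanged only if the second one involves $D_{T_0^*}$.

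If you adopt that reading consistently, Step~2 closes at once by your second route. For Step~3 you also need $\Ker D_{T_0^*}=\{\0\}$, which follows from \rf{yadro} by Remark 1. Your observation that $\Ker D_{T_1}=\{\0\}$ is needed only when $\b>0$ is a correct refinement that the paper passes over.
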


{\bf Remark 1.} Actually, $\Ker D_{T_0} = \{\0\}$ if and only if $\Ker D_{T^*_0} = \{\0\}$, and so \rf{yadro}
can be replaced with the condition
\bay
\label{vtoroeyadro}
\Ker D_{T^*_0} = \{\0\}.
\ey
Indeed, \rf{yadro} is equivalent to the fact that $\Ker(I-T_0^*T_0)= \{\0\}$ while \rf{vtoroeyadro} is equivalent to the condition
$\Ker(I-T_0T^*_0)= \{\0\}$. It remains to use the well-kown fact that $1$ is an eigenvalue of $T_0^*T_0$ if and only if it is an eigenvalue of $T_0T^*_0$.

\medskip

{\bf Remark 2.} Clearly, each of the inclusions in \rf{-2a} implies that $T_1- T_0 \in\bS_p$.

\medskip

\Pf
Put $Y\df I - T_1^*T_1 = D^2_{T_1}$ and $X\df I - T^*_0T_0 = D^2_{T_0}$. It is easily seen that
\begin{equation}
\label{eq:3.19}
Y-X = T^*_0T_0 - T_1^*T_1 = -(T_1^* -T^*_0)T_0 - T_1^*(T_1 - T_0) \in\bS_p.
\end{equation}
Clearly, we have the obvious commutation relation
$$
T_0D^2_{T_0} = D^2_{T^*_0} T_0
$$
which implies that
\bay \label{dlya polinomov}
T_0q(D^2_{T_0}) = q(D^2_{T^*_0})T_0
\ey
for an arbitrary polynomial $q$, and so \rf{dlya polinomov} also holds for an arbitrary continuous function $q$. In particular,
$$
T_0D_{T_0}^\alpha = D_{T^*_0}^\alpha  T_0, \quad\text{for every}\quad  \alpha \ge 0.
$$
 Hence,
$$
T_0f = D_{T^*_0}^{\alpha} T_0 D_{T_0}^{-\alpha}f, \qquad f \in \Range X^{\alpha},
$$
and so
\begin{equation}
\label{eq:3.23}
D_{T^*_0}^{-\alpha} T_0f = T_0 D_{T_0}^{-\alpha}f, \qquad f \in \Range X^\alpha.
\end{equation}
It follows from \eqref{eq:3.19} that
\begin{align*}
Y^{-\beta}(Y-X)X^{-\alpha}f
&= -Y^{-\beta}T_1^*(T_1 -T_0)D_{T_0}^{-2\alpha}f \\[.2cm]
&- Y^{-\beta}(T_1^* - T^*_0)T_0D_{T_0}^{-2\alpha}f,\quad f \in\Range X^\a.
\end{align*}
Together with \eqref{eq:3.23} this yields

\begin{align*}
Y^{-\beta}(Y-X)X^{-\alpha}f&
= -T_1^*D_{T_1^*}^{-2\beta}(T_1 -T_0)D_{T_0}^{-2\alpha}f\\[.2cm]
&-D_{T_1}^{-2\beta}(T^*_0 - T_1^*)D_{T_0^*}^{-2\alpha}T_0f,
\quad f \in\Range X^\a.
\end{align*}
Note that because of \rf{-2a} both operators on the right hand side of this identity are bounded
and $\Range X^\a$ is dense in $\h$. Hence,
$$
Y^{-\beta}(Y-X)X^{-\alpha} = -T_1^* D_{T_1^*}^{-2\beta}(T_1 -T_0)D_{T_0}^{-2\alpha}
- D_{T_1}^{-2\beta}(T^*_0 - T_1^*)D_{T_0^*}^{-2\alpha} T_0.
$$
Hence, with an account of condition   \rf{-2a} one gets  $Y^{-\beta}(Y-X)X^{-\alpha} \in \bS_p$.

Applying Theorem \ref{Fyokla} with $\s= 1/2$ ensures the inclusion
$Y^{1/2}-X^{1/2} \in \bS_p$ which means that $D_{T_1} - D_{T_0} \in \bS_p$.
Interchanging the roles of $T_1$ and $T_1^*$, we find that
$D_{T_1^*} - D_{T^*_0} \in \bS_p$. $\bl$

\medskip

The following lemma was established in \cite{MNP2}. We submit a proof here for completeness.

\begin{lem}
\label{vspomogalka}
Let $(T_0,T_1)$ be a pair of contractions with trace class difference. Suppose that
conditions {\em\rf{defekty}} hold for $p=1$. Then the pair $(T_0,T_1)$ has a real integrable spectral shift function.
\end{lem}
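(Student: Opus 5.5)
Let $U_j$ denote the Julia (Halmos) unitary dilation of $T_j$ on $\h\oplus\h$:
$$
U_j=\begin{pmatrix} T_j & D_{T_j^*}\\ D_{T_j} & -T_j^*\end{pmatrix},\qquad j=0,1.
$$
By \rf{defekty} with $p=1$ and $T_1-T_0\in\bS_1$, every block of $U_1-U_0$ is trace class, so $U_1-U_0\in\bS_1$. Krein's theorem for pairs of unitary operators then yields a real-valued function $\bs\eta\in L^1(\T)$ satisfying \rf{fosuo} for the pair $(U_0,U_1)$. The task is to pass from $U_j$ to $T_j$. The catch is that the Julia dilation is not a power dilation: $f(T_j)$ is not a compression of $f(U_j)$ for polynomials of degree at least $2$.

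The approach is to use the Sz.-Nagy minimal unitary dilation built from the Julia operator. On $\mathcal K=\ell^2(\Z,\h)$, let $W_j$ act as the shift, except at the position $0$/$1$ block, where it acts as the Julia matrix $U_j$. Concretely, $W_j=S\,(I\oplus U_j\oplus I)$ with suitable bookkeeping, where $S$ is a fixed bilateral shift. Then $W_1-W_0$ equals $S$ times $U_1-U_0$ embedded on two coordinates, so $W_1-W_0\in\bS_1$. Moreover $P_\h W_j^n|_\h=T_j^n$ for $n\ge0$, hence $f(T_j)=P_\h f(W_j)|_\h$ for analytic polynomials $f$.

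The main obstacle is that the trace of $f(W_1)-f(W_0)$ over $\mathcal K$ is not the trace of the compression. The plan is to write
$$
f(W_1)-f(W_0)=\sum_k W_1^{k}(W_1-W_0)W_0^{n-1-k}\quad\text{for } f=z^n,
$$
and to check how the extra diagonal blocks behave. On the coordinates other than $\h$, the operators $W_0$ and $W_1$ coincide with the shift, so the contributions come only from the two defect coordinates. These can be collected into $\trace\big(f(\widetilde T_1)-f(\widetilde T_0)\big)$ for the adjoint-type compressions on the negative half, and they are analytic in $\bar z$.

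An alternative that avoids this bookkeeping is the following. Decompose $W_j$ with respect to $\mathcal K=\mathcal K_-\oplus\h\oplus\mathcal K_+$ as an upper triangular operator matrix $\begin{pmatrix} A_j&*&*\\0&T_j&*\\0&0&B_j\end{pmatrix}$, where $B_j$ is the unilateral shift on $\mathcal K_+$ and $A_j^*$ is a unilateral shift. Actually $A_j$ and $B_j$ do not depend on $j$ once one uses the minimal isometric dilation structure. The diagonal of $f(W_j)$ is then $\big(f(A),f(T_j),f(B)\big)$, so
$$
\trace\big(f(W_1)-f(W_0)\big)=\trace\big(f(T_1)-f(T_0)\big),
$$
provided the trace of a trace class triangular matrix equals the sum of the traces of its diagonal blocks. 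This holds because the off-diagonal blocks contribute nothing.

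What has to be verified is that the triangular structure holds with $j$-independent corners, and that $W_1-W_0\in\bS_1$. The second point follows from \rf{defekty}, since the entries of $W_1-W_0$ are $T_1-T_0$, $D_{T_1}-D_{T_0}$, and $D_{T_1^*}-D_{T_0^*}$. Granting this, Krein's result for the unitary pair $(W_0,W_1)$ gives a real $\bs\xi\in L^1(\T)$ with
$$
\trace\big(f(T_1)-f(T_0)\big)=\int_\T f'\bs\xi\,\rd\z
$$
for analytic polynomials $f$. The formula then extends to the required class by the continuity argument already used in \cite{MNP2}. I expect the verification of the triangular form to be the only delicate point.
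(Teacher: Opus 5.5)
Your final argument is essentially the paper's proof. The operator $W_j=S(I\oplus U_j\oplus I)$ on $\ell^2(\Z,\h)$, with the bookkeeping that puts $T_j$ in the $(0,0)$ position, is exactly the Sch\"affer dilation $U^{[T_j]}$; this dilation is generally not minimal, which is what lets both dilations live on one space. Your triangular-form observation with $j$-independent corners is precisely what justifies $\trace\big(f(U^{[T_1]})-f(U^{[T_0]})\big)=\trace\big(f(T_1)-f(T_0)\big)$, which the paper states without comment.

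There are two harmless slips. In an upper triangular form of a unitary, the invariant corner $A$ is the isometry (a unilateral shift) and $B$ is the co-isometry, not the reverse. Also, the detours through the Julia operator alone and the telescoping ``plan'' can simply be dropped.
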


%Put $Y_\flat \df I - TT^* = D^2_{T_*}$.

%As in \eqref{eq:3.19}  we have
%\begin{equation}
%\label{eq:dif-ce_Y_*-X_*}
%Y_\flat - X_\flat = T_0T^*_0 - TT^* = -T(T^*-T^*_0) - (T-T_0)T^*_0.
%\end{equation}
%As above, starting with the commutation relation
%$$
%T^*_0X_\flat = XT^*_0,
%$$
%we find that
%$$
%X^{-\a}T^*_0f = T^*_0X_\flat^{-\alpha}f, \quad f \in \Range X^\alpha_\flat.
%$$
%Combining this equality with \eqref{eq:dif-ce_Y_*-X_*}, we obtain
%$$
%(Y_\flat - X_\flat)X^{-\alpha}_\flat f = -T(T^*-T^*_0)X^{-\alpha}_\flat  f- (T-T_0)X^{-\alpha}T^*_0f, \quad f\in \Range X^\alpha_\flat,
%$$
%which yields
%$$
%(Y_\flat - X_\flat)X^{-\a}_\flat = -T(T^*-T^*_0)X^{-\a}_\flat -
%(T-T_0)X^{-\a}T^*_0, \quad f\in \Range X^\a_\flat.
%$$
%Therefore, $[(Y_\flat - X_\flat)X^{-\alpha}_\flat] \in \bS_1$ and again, by Lemma \ref{lem:3.7},

%\begin{thm}
%\label{veshch}
%Suppose that $T_1$ and $T_0$ are contractions satisfying {\em{\rf{yadra}}} and
%{\em{\rf{-2a}}}. Then the pair $\{T_1,T_0\}$ has a real integrable spectral shift function.
%\end{thm}

To prove the lemma, we need the construction of Sch\"affer matrix dilation. Let $T$ be a contraction on a Hilbert space $\h$. We consider the Sch\"affer matrix dilation $U^{[T]}$ on the two-sided sequence space
$\ell^2_\Z(\h) = \bigoplus_{j \in \Z}\h_j$,  $\h_j = \h$,
of $\h$-valued sequences, see \cite{SNF}, Ch. 1, \S\:5.  We identify $\h$ with the subspace of sequences
$\{v_n\}_{n\in\Z}$ such that $v_j=\0$ for $j\ne0$.
This dilation does not have to be minimal. However, an advantage of this dilation is that it allows us to consider unitary dilations of contractions on $\h$ on the single space $\ell^2_\Z(\h)$.

The block matrix representation  of $U^{[T]}$ has the form:
\bay
\label{maShe}
U^{[T]}=
\left(\begin{matrix}
\ddots&\ddots&\vdots&\vdots&\vdots&\vdots&\vdots&\vdots&\iddots\\
\cdots&\0&I&\0&\0&\0&\0&\0&\cdots\\
\cdots&\0&\0&I&\0&\0&\0&\0&\cdots\\
\cdots&\0&\0&\0&D_T&-T^*&\0&\0&\cdots\\
\cdots&\0&\0&\0&T&D_{T^*}&\0&\0&\cdots\\
\cdots&\0&\0&\0&\0&\0&I&\0&\cdots\\
\cdots&\0&\0&\0&\0&\0&\0&I&\cdots\\
\iddots
&\vdots&\vdots&\vdots&\vdots&\vdots&\vdots&\ddots&\ddots
\end{matrix}\right).
\ey
Here the entry $T$ is at the $(0,0)$ position. In other words, the entries
$U^{[T]}_{j,k}$ of $U^{[T]}$ are given by
$$
U^{[T]}_{0,0}=T,\quad U^{[T]}_{0,1}=D_{T^*},\quad U^{[T]}_{-1,0}=D_T,
\quad U^{[T]}_{-1,1}=-T^*,\quad
U^{[T]}_{j,j+1}=I\quad\mbox{for}\quad j\ne0,~-1,
$$
while all the remaining entries are equal to $\0$.

\medskip

{\bf Proof of Lemma \ref{vspomogalka}.}
Consider the Sch\"affer matrix dilations $U^{[T_0]}$ and $U^{[T_1]}$
of $T_0$ and $T_1$.

It follows easily from \eqref{maShe}
and \rf{defekty}
that  $U^{[T_1]}-U^{[T_0]} \in\bS_1$. By Krein's theorem \cite{Kr2}, the pair
$\Big(U^{[T_0]}, U^{[T_1]}\Big)$ has a real spectral shift function $\bs\xi$ in $L^1(\T)$, i.e., the following trace formula holds
$$
\trace\Big(f\big(U^{[T_1]}\big)-f\big(U^{[T_0]}\big)\Big)=\int_\T f'(\z)\bs\xi(\z)\,\rd\z
$$
for an arbitrary trigonometric polynomial $f$.

It is easy to see that if $f$ is an analytic polynomial, then the $(0,0)$ entries  $f\big(U^{[T_1]}\big)_{0,0}$ and $f\big(U^{[T_0]}\big)_{0,0}$
are given by
$$
f\big(U^{[T_1]}\big)_{0,0}=f(T_1)\quad\mbox{and}\quad f\big(U^{[T_0]}\big)_{0,0}=f(T_0).
$$

It follows that for an arbitrary analytic polynomial $f$,
$$
\trace\big(f(T_1)-f(T_0)\big)
=\trace\Big(f\big(U^{[T_1]}\big)-f\big(U^{[T_0]}\big)\Big)=\int_\T f'(\z)\bs\xi(\z)\,\rd\z.
$$
Thus, $\bs\xi$ is a real integrable spectral shift function of the pair $\{T_0,T_1\}$.
$\bl$

%\medskip
%
%Note that the observation that under conditions \rf{defekty}
%each spectral shift function for the pair $\{U^{[T_0]},U^{[T_1]}\}$ must be
%a spectral shift function for the pair $\{T_0,T_1\}$ was made in \cite{MNP2}.

\begin{thm}
\label{veshch}
Suppose that $T_1$ and $T_0$ are contractions satisfying {\em{\rf{yadro}}} and
{\em{\rf{-2a}}} for $p=1$. Then the pair $(T_1,T_1)$ has a real integrable spectral shift function.
\end{thm}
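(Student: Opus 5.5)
The plan is simply to chain Lemma \ref{raz def} with Lemma \ref{vspomogalka}. (The statement contains an obvious typo: the pair meant is $(T_0,T_1)$.)

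First, by Remark 2, the hypothesis \rf{-2a} with $p=1$ already forces $T_1-T_0\in\bS_1$, since $D_{T_1^*}^{-2\b}$ and $D_{T_0}^{-2\a}$ are, formally, inverses of contractions. So the pair $(T_0,T_1)$ is a pair of contractions with trace class difference, and the notion of spectral shift function makes sense for it. The only point to check is that
$$
T_1-T_0=D_{T_1^*}^{2\b}\Big(D_{T_1^*}^{-2\b}(T_1-T_0)D_{T_0}^{-2\a}\Big)D_{T_0}^{2\a}
$$
holds on the dense set $\Range D_{T_0}^{2\a}$, and hence everywhere by continuity. Here $D_{T_1^*}^{2\b}$ and $D_{T_0}^{2\a}$ are bounded, and the middle factor is the trace class extension. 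If this identification of the closure needs care, I would argue as in the proof of Lemma \ref{raz def}: both sides are bounded operators agreeing on a dense subspace.

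Next, apply Lemma \ref{raz def} with $p=1$. Conditions \rf{yadro} and \rf{-2a} with $\a,\b\ge0$, $\a+\b\in(\frac12,1]$ are exactly its hypotheses, so it gives $D_{T_1}-D_{T_0}\in\bS_1$ and $D_{T_1^*}-D_{T_0^*}\in\bS_1$, which is \rf{defekty} for $p=1$.

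Finally, Lemma \ref{vspomogalka} applies directly. The pair $(T_0,T_1)$ has trace class difference and satisfies \rf{defekty} with $p=1$, so the Sch\"affer dilations have trace class difference, and Krein's real spectral shift function for the unitary pair serves for $(T_0,T_1)$.

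There is no genuine obstacle, since the argument is a composition of the two lemmas. The only mildly delicate point is the bookkeeping in the first step, namely checking that trace class membership of $T_1-T_0$ follows from \rf{-2a}; Lemma \ref{raz def} itself does not require it as a separate hypothesis, but Lemma \ref{vspomogalka} does.
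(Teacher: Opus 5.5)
Your proposal is correct and matches the paper's proof, which deduces Theorem~\ref{veshch} immediately from Lemma~\ref{raz def} and Lemma~\ref{vspomogalka}. Your preliminary step, that $T_1-T_0\in\bS_1$, is the paper's Remark~2 made explicit, and you are right to read the pair as $(T_0,T_1)$. One small precision: the factorization $T_1-T_0=D_{T_1^*}^{2\b}\big(D_{T_1^*}^{-2\b}(T_1-T_0)D_{T_0}^{-2\a}\big)D_{T_0}^{2\a}$ holds for every vector directly once the operator in \rf{-2a} is defined on all of $\Range D_{T_0}^{2\a}$, as the paper tacitly assumes in the proof of Lemma~\ref{raz def}; mere density of its domain would not by itself justify your ``hence everywhere by continuity'' step.
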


\Pf The result follows immediately from Lemma \ref{raz def} and Lemma \ref{vspomogalka}. $\bl$

%\begin{cor}
%Let $T$ and $T_0$ be contractions such that $T- T_0 \in \bS_1$. If $1\in \rho(T_0)$, then
%the pair $\{T,T_0\}$ has a real-valued integrable spectral shift function.
%\end{cor}
%
%\Pf
%The condition $1\in \rho(T_0)$ is equivalent to the bounded invertibility of each of the defect operators $D_{T_0}$ and
%$D_{T^*_0}$.
%Therefore  the pair $\{T,T_0\}$ meets condition  \eqref{eq:3.18}. Theorem \ref{veshch} completes the proof.
%%%The result follows from Corollary \ref{cor:3.8}.
%$\bl$

\medskip

The following corollary shows that  our Theorem  \ref{veshch} yields the existence of a real-valued spectral shift function under much weaker assumptions than the main result of \cite{CS} that has been mentioned in the introduction.

\begin{cor}
Let $T$ and $T_0$ be contractions such that $T_1- T_0 \in \bS_1$. If $\|T_0\| < 1$, then
the pair $(T_0,T_1)$ has a real-valued spectral shift function.
\end{cor}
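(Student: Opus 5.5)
Since $\|T_0\|<1$, both $I-T_0^*T_0$ and $I-T_0T_0^*$ are bounded below by $(1-\|T_0\|^2)I$. So $D_{T_0}$ and $D_{T_0^*}$ are boundedly invertible, and in particular $\Ker D_{T_0}=\{\0\}$, which is condition \rf{yadro}. The plan is to verify condition \rf{-2a} with $p=1$, taking $\b=0$ and $\a=1$ (so $\a+\b=1\in(\tfrac12,1]$), and then apply Theorem \ref{veshch}.

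With this choice the operators in \rf{-2a} become
$$
(T_1-T_0)D_{T_0}^{-2}\quad\mbox{and}\quad (T_1^*-T_0^*)D_{T_0}^{-2}.
$$
Both are everywhere defined, being a trace class operator times a bounded operator. Since $T_1-T_0\in\bS_1$ implies $T_1^*-T_0^*\in\bS_1$, and $\bS_1$ is an ideal, both belong to $\bS_1$.

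The one point to check concerns the second operator in the proof of Lemma \ref{raz def}. After the intertwining step \eqref{eq:3.23}, the factor $D_{T_0}^{-2\a}$ effectively appears as $D_{T_0^*}^{-2\a}$. Since $D_{T_0^*}$ is also boundedly invertible, that operator is again trace class, so no difficulty arises whichever defect operator is meant there.

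Theorem \ref{veshch} then yields a real-valued integrable spectral shift function for the pair $(T_0,T_1)$. Alternatively, one can bypass the lemmas: by Corollary \ref{cor:3.8} with $\s=\tfrac12$, applied to $X=D_{T_0}^2$, $Y=D_{T_1}^2$, and then to $X=D_{T_0^*}^2$, $Y=D_{T_1^*}^2$, both inclusions in \rf{defekty} hold with $p=1$. Here $Y-X\in\bS_1$ by \eqref{eq:3.19} and its analogue for $TT^*$. Lemma \ref{vspomogalka} then concludes.

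There is no real obstacle. The only care needed is the bookkeeping that invertibility of $X$ gives boundedness of $X^{-\a}$ for every $\a$, and that both defect operators of $T_0$ are invertible.
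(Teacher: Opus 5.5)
Your argument is correct and is essentially the paper's. The paper notes that $D_{T_0}$ and $D_{T_0^*}$ are invertible and invokes Corollary \ref{cor:3.8} (followed by Lemma \ref{vspomogalka}), which is exactly your second route; your main route through Theorem \ref{veshch} with $\a=1$, $\b=0$ reduces to the same computation. You are also right that the second operator in \rf{-2a} actually enters the proof of Lemma \ref{raz def} with $D_{T_0^*}^{-2\a}$, which is harmless here, and the direct route via Corollary \ref{cor:3.8} is slightly cleaner because it avoids the hypothesis $\Ker Y=\{\0\}$ (i.e.\ $\Ker D_{T_1}=\{\0\}$) formally imposed in Theorem \ref{Fyokla}, which can fail in this setting.
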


\Pf If $\|T_0\| < 1$, then $D_{T_0}$ and $D_{T^*_0}$ are invertible. The result follows from
Corollary \ref{cor:3.8}. $\bl$

\medskip

Next, we present a result obtained in another way by the authors in [MNP2], Lemma 9.5.

\begin{cor} Suppose that $T_0$ is a Fredholm contraction of zero index. Then there exists an
invertible contraction $T_1$ such that $T_1-T_0\in\bS_1$ and the pair $(T_0,T_1)$ has a real-valued
spectral shift function.
\end{cor}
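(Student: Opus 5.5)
Given a Fredholm contraction $T_0$ of index zero, we will construct a finite rank perturbation $T_1$ of $T_0$ that is an invertible strict contraction. The preceding corollary (equivalently Corollary \ref{cor:3.8} together with Lemma \ref{vspomogalka}) will then give a real-valued spectral shift function for the pair $(T_1,T_0)$. Since the real-valuedness property is symmetric in the pair up to sign, it also holds for $(T_0,T_1)$.

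The construction runs as follows.

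1. Since $T_0$ is Fredholm of index zero, $\dim\Ker T_0=\dim\Ker T_0^*=n<\infty$. Take the polar decomposition $T_0=V|T_0|$, where $V$ is a partial isometry from $(\Ker T_0)^\perp$ onto $\overline{\Range T_0}=(\Ker T_0^*)^\perp$. Extend $V$ to a unitary $U$ by adding an isometry $W$ mapping $\Ker T_0$ onto $\Ker T_0^*$. Then $T_0=U|T_0|$ with $\|\,|T_0|\,\|\le1$.

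2. Because $T_0$ is Fredholm, $0$ is at most an isolated point of the spectrum of $|T_0|$, with finite-dimensional eigenspace $\Ker T_0$. Hence there is $\delta\in(0,1)$ with $\sigma(|T_0|)\setminus\{0\}\subset[\delta,1]$.

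3. Put $A\df\tfrac12\big(|T_0|+\delta P_{\Ker T_0}\big)$. This is an invertible positive operator with $\sigma(A)\subset[\delta/2,1/2]$, so $\|A\|\le 1/2$. Unfortunately $A-|T_0|$ is not of finite rank. To repair this, keep the two pieces separate: set $T_1\df U\big(|T_0|+\tfrac{\delta}{2}P_{\Ker T_0}\big)$. This is invertible and $T_1-T_0=\tfrac{\delta}{2}UP_{\Ker T_0}$ has rank $n$. However, $\|T_1\|=1$ if $1\in\sigma(|T_0|)$, so $T_1$ need not be a strict contraction.

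4. The way out is to apply the preceding corollary with the roles swapped: we need $\|T_1\|<1$ for the operator placed in the role of "$T_0$" there, and no construction of the type in step 3 can make $T_1$ strict when $1\in\sigma(|T_0|)$. So step 4 is to drop strictness and use invertibility directly. The pair $(T_0,T_1)$ differs by a finite-rank operator, and the introduction notes (citing \cite{MalNei2015}) that finite rank perturbations always admit a real-valued integrable spectral shift function. Taking $T_1$ as in step 3 therefore gives an invertible contraction with $T_1-T_0\in\bS_1$ (indeed of finite rank), and the pair $(T_0,T_1)$ has a real-valued spectral shift function.

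The main obstacle is the paper's intent that the conclusion come from Theorem \ref{veshch} rather than from \cite{MalNei2015}. Pursuing that route would mean checking that \rf{-2a} holds for the $T_1$ constructed in step 3. The difference $T_1-T_0=\tfrac{\delta}{2}WP_{\Ker T_0}$ has range in $\Ker T_0^*$ and lives on $\Ker T_0$. On these finite-dimensional spaces $D_{T_0}=I$ and $D_{T_1^*}$ is invertible, since $T_1^*$ acts there with norm at most $\delta/2<1$. So the weighted operators in \rf{-2a} are finite rank, provided the kernel condition \rf{yadro} holds. Condition \rf{yadro} can fail if $1$ is an eigenvalue of $|T_0|$, and in that case I would fall back on the finite-rank result of \cite{MalNei2015} cited in the introduction.
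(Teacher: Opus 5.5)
Your final argument is correct, but it does not follow the paper's route. Write $P=P_{\Ker T_0}$. Your $T_1=U\big(|T_0|+\tfrac{\delta}{2}P\big)$ is an invertible contraction with $T_1-T_0=\tfrac{\delta}{2}WP$ of rank $n$. You then conclude by quoting the finite-rank theorem of \cite{MalNei2015} from the introduction. The paper instead uses the unscaled perturbation $T_1=T_0+V$, where $V$ is a partial isometry from $\Ker T_0$ onto $\Ker T_0^*$; this is your operator with $\delta/2$ replaced by $1$. It derives the conclusion from its own Theorem~\ref{veshch}, and hence ultimately from Lemma~\ref{vspomogalka}, i.e.\ Sch\"affer dilations plus Krein's theorem for unitary pairs. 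Your route hands the whole content to an external result, so the specific construction hardly matters: any invertible contraction at finite-rank distance from $T_0$ would do, and the strict-contraction plan you announce and abandon plays no role. The paper's route keeps everything inside its own machinery, which is also what handles trace-class rather than only finite-rank differences.

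You also do not need the fallback to \cite{MalNei2015}. Since $P$ reduces $|T_0|$ and $D_{T_0}P=P$, you get $D_{T_1}^2=D_{T_0}^2-\tfrac{\delta^2}{4}P$, hence $D_{T_1}-D_{T_0}=\big(\sqrt{1-\delta^2/4}-1\big)P$. In the same way, $T_1T_1^*=T_0T_0^*+\tfrac{\delta^2}{4}P_{\Ker T_0^*}$ gives $D_{T_1^*}-D_{T_0^*}=\big(\sqrt{1-\delta^2/4}-1\big)P_{\Ker T_0^*}$. So \rf{defekty} holds with finite-rank differences, and Lemma~\ref{vspomogalka} applies directly with no kernel condition \rf{yadro} at all.

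This direct check is also the cleanest justification of the paper's own proof. For $T_1=T_0+V$ one finds $D_{T_0}-D_{T_1}=P_{\Ker T_0}$ and $D_{T_0^*}-D_{T_1^*}=P_{\Ker T_0^*}$. By contrast, the literal appeal to Theorem~\ref{veshch} with $\alpha=0$, $\beta=1$ does not go through as stated, for two reasons. First, $T_1T_1^*=T_0T_0^*+P_{\Ker T_0^*}$ forces $D_{T_1^*}$ to vanish on $\Ker T_0^*=\Range(T_1-T_0)$. Second, \rf{yadro} is not implied by the Fredholm hypothesis. Your scaling by $\delta/2<1$ is exactly what keeps $D_{T_1^*}$ invertible on $\Ker T_0^*$. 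That is why your remark that the operators in \rf{-2a} become finite rank for your $T_1$ (when \rf{yadro} holds) is correct.
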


\Pf We define a perturbation V like in [MNP2] as a partial isometry with 
initial space $\Ker T_0$ and final space $\Ker T^*$. Then the operator $T_1=T_0+V$ is a
desired one. Indeed, $T_1$ is an invertible contraction, and so the pair
$(T_0,T_1)$ satisfies the hypotheses of Theorem \ref{veshch} with $\a = 0$ and $\b = 1$.

\begin{cor}
Let $T$ and $X$ be contractions such that $X\ge\0$ and $I-X\in\bS_1$. Suppose that the following conditions hold:
$$
\Ker D_T = \{\0\}
$$
and
$$
(I-X)TD_T^{-2\a}\in\bS_1
\quad \mbox{and} \quad
T^*(I-X)D_{T^*}^{-2\a}\in \bS_1
$$
for a number $\a$ in $(\tfrac{1}{2},1]$.
Then the pair $(T,XT)$ has both a real-valued integrable spectral shift function and a purely imaginary
integrable 
spectral shift function.
\end{cor}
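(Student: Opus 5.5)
The plan is to obtain the two spectral shift functions separately. The purely imaginary one needs no extra hypotheses: Lemma~9.2 of \cite{MNP2}, quoted in the introduction, says that for any contraction $T$ and any contraction $X\ge\0$ with $I-X\in\bS_1$, the pair $(T,XT)$ has a purely imaginary integrable spectral shift function. So all the work goes into the real-valued one. For that I will apply Theorem~\ref{veshch} (equivalently, Lemma~\ref{raz def} followed by Lemma~\ref{vspomogalka}) with
$$
T_0=T,\qquad T_1=XT,\qquad \b=0,
$$
and with the given $\a\in(\tfrac12,1]$. Then $\a+\b=\a\in(\tfrac12,1]$, and condition \rf{yadro} is exactly the hypothesis $\Ker D_T=\{\0\}$. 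Note also that $T_1$ is a contraction and $T_1-T_0=-(I-X)T\in\bS_1$.

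Next I verify \rf{-2a} with $p=1$. Since $\b=0$, the factors $D_{T_1^*}^{-2\b}$ and $D_{T_1}^{-2\b}$ are the identity. Hence the first operator in \rf{-2a} is
$$
(T_1-T_0)D_{T_0}^{-2\a}=-(I-X)TD_T^{-2\a},
$$
which is densely defined on $\Range D_T^{2\a}$ (dense because $\Ker D_T=\{\0\}$). By hypothesis it extends to an operator in $\bS_1$. The second operator involves
$$
T_1^*-T_0^*=-T^*(I-X).
$$

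The main obstacle is this second operator. Read literally, the second condition of \rf{-2a} asks for $T^*(I-X)D_T^{-2\a}\in\bS_1$, whereas the corollary assumes $T^*(I-X)D_{T^*}^{-2\a}\in\bS_1$. To resolve this I will go back to the proof of Lemma~\ref{raz def} rather than its statement. There, after using the intertwining identity \rf{eq:3.23}, the operator that must be bounded and in $\bS_p$ is
$$
D_{T_1}^{-2\b}(T_0^*-T_1^*)D_{T_0^*}^{-2\a}\,T_0,
$$
that is, the version with $D_{T_0^*}$. With $\b=0$ this equals $T^*(I-X)D_{T^*}^{-2\a}T$, which is in $\bS_1$ by hypothesis. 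By Remark~1, $\Ker D_{T^*}=\{\0\}$, so $D_{T^*}^{-2\a}$ is densely defined. Consequently
$$
Y^{-\b}(Y-X)X^{-\a}\in\bS_1,\qquad X=D_T^2,\quad Y=D_{XT}^2,
$$
and Theorem~\ref{Fyokla} with $\s=\tfrac12$ gives $D_{XT}-D_T\in\bS_1$.

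For the adjoint defect operators I will run the same argument on the pair $(T^*,T^*X)$, with the roles of the two hypotheses interchanged, and with $\Ker D_{T^*}=\{\0\}$ again supplied by Remark~1. The intertwining step now turns the factor $D_{T^*}^{-2\a}$ into $D_T^{-2\a}$, so both given inclusions get used. This yields $D_{(XT)^*}-D_{T^*}\in\bS_1$. I will check this bookkeeping carefully, since it is exactly where a mismatch between $D_T$ and $D_{T^*}$ could creep in.

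Once both inclusions in \rf{defekty} hold for $p=1$, Lemma~\ref{vspomogalka} (via the Sch\"affer dilations and Krein's theorem for unitary pairs) gives a real-valued integrable spectral shift function for $(T,XT)$. Combined with the purely imaginary one from Lemma~9.2 of \cite{MNP2}, this completes the proof. Consistently with the parametrization $\{\bs\xi+h:\ h\in H^1\}$, the two functions then differ by an element of $H^1$.
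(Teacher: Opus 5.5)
Your proposal is correct and follows the paper's route: Theorem~\ref{veshch} with $T_0=T$, $T_1=XT$, $\b=0$ gives the real-valued function, and Lemma~9.2 of \cite{MNP2} gives the purely imaginary one. You were also right that the second operator in \rf{-2a} should carry $D_{T_0^*}^{-2\a}$, which is what the proof of Lemma~\ref{raz def} actually uses and is exactly the form of the corollary's hypothesis. The hypothesis $\Ker D_{XT}=\{\0\}$ of Theorem~\ref{Fyokla}, which you did not check, causes no trouble: it is not used when $\b=0$, and in any case it follows from $\|XTf\|\le\|Tf\|$ together with $\Ker D_T=\{\0\}$.
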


\Pf Indeed, the first part of the conclusion is an immediate consequence of Theorem \ref{veshch}
while the second part is the essence of Lemma 9.2 of the paper \cite{MNP2}.
$\bl$

\medskip

Let us also observe that if $\bs\xi$ is a real-valued spectral shift function and $\ri\bs\eta$ is a purely imaginary
spectral shift function, then the function  $\bs\xi-\ri\bs\eta$ belongs to the Hardy class $H^1$, and so the harmonically conjugated functions $\widetilde{\bs\xi}$ and $\widetilde{\bs\eta}$ must be integrable.

If, in addition to this, $\bs\xi\ge\0$, then the function $\bs\xi$ satisfies the Zygmund condition
$$
\int_\T\bs\xi(\z)\log(1+\bs\xi(\z))\,{\rm d}\m(\z)<\be.
$$
Recall in this connection that in the papers \cite{MN} and \cite{MNP1} it was observed that if  a pair of contractions with trace class difference possesses a complex-valued spectral shift function that satisfies
the Zygmund condition, then it also has a real-valued integrable spectral shift function.

\

\section{\bf The case of dissipative operators}
\label{dissipatsiya}

\

As we have mentioned in the introduction if $(L_0,L_1)$ is a pair of maximal dissipative operators with trace class resolvent difference, i.e.,
\begin{equation}
\label{eq:res,dif-ce,of,trace,cl}
(L_1+ \ri I)^{-1} - (L_0 +\ri I)^{-1} \in \bS_1,
\end{equation}
trace formula \rf{fcddo} holds with a spectral shift function $\bs\xi$ satisfying \rf{s_vesom}.

In this section we find a sufficient condition for the existence of a {\it real-valued} spectral shift function $\bs\xi$ satisfying \rf{s_vesom}, see Theorem \ref{imL0} below.

Recall also that if we replace condition \rf{eq:res,dif-ce,of,trace,cl} with the condition
\bay
\label{yadrazn}
L_1-L_0\in\bS_1,
\ey
then trace formula \rf{fcddo} holds with an {\it integrable} spectral shift function $\bs\xi$. Recall that both results were obtained in \cite{MNP2}.

However, it turns out that under condition \rf{yadrazn}, an {\it integrable} real-valued spectral shift function
does not have to exist. Moreover, such a function cannot exist if
$\trace(L_1-L_0)\not\in\R$, see Theorem \ref{net_takikh} below.

%for the existence of a real-valued spectral shift function. However, our condition implies the existence of a real-valued spectral shift function $\bs\xi$ that satisfies inequality \rf{s_vesom}. Moreover, it turns out that under our assumptions a real-valued integrable spectral shift function does not have to exist. 

With each maximal dissipative operator $L$ we associate the quadratic form
${\frak T}^{\im}_{L}[f]=\im (Lf,f)$, $f\in {\rm dom}(L)$.
This form is non-negative. Though it is not necessarily closable.
%In what follows we always assume that the form ${\frak T}^{\im}_{L}[f]$ is closable
%and denote by $\im L$ the non-negative self-adjoint  operator associated with its closure $\clos {\frak T}_{L,\im}$
%in accordance with the first representation theorem (see \cite{Ka}..........)

In what follows we always assume that the symmetric form ${\frak T}^{\im}_{L}[f]$ is closable,
and so by the first representation theorem (see \cite[Ch.VI, Th. 2.1]{Ka}),
its closure $\clos {\frak T}_{L,\im}$  corresponds to a  nonnegative self-adjoint  operator,
which we denote by   $\im L$.

If ${\rm dom}(L) = {\rm dom}(L^*)$, then the operator $\im L$ is just the Friedrichs extension of the
symmetric operator  $\frac{1}{2\ri}(L-L^*) \ge 0$ (see \cite[Ch.VI, Theorem 2.9]{Ka}).

If the numerical range of ${\frak T}^{\im}_{L}[f]$  is contained in the sector 
$$
S_\vt = \{z\in \Bbb C_+:
\im z \ge \cot\vt |\re z| \},
$$
then the form ${\frak T}^{\im}_{L}[f]$ is necessarily closable
and the operator $\im L$  definitely exists, it is  nonnegative and self-adjoint (see \cite[Ch.VI, Th. 1.27]{Ka}).

Recall that if $T$ and $R$ are linear operator in Hilbert space with domains $\dom(T)$ and $\dom(R)$,
the operator $R$ is said to be {\it dominated by} $T$ if
\bay
\label{1v2}
\dom(T)\subset\dom(R)
\ey
and
\bay
\label{podchi}
\|Rx\|^2\le a\|Tx\|^2+b\|x\|^2,\quad x\in\dom(T),
\ey
for some positive numbers $a$ and $b$. If $R$ is dominated by $T$, the number
$$
{\rm a}_T(R)\df\inf\{a>0:~\mbox{ there exists positive}~b~\mbox{ such that }~\rf{1v2}~\mbox{ holds }\}
$$
is called the {\it $T$-bound of $R$}.

It is well known (see \cite{BS2}, \S\:3.4) that if $T$ and $R$ are closed operators satisfying \rf{1v2}, then inequality \rf{podchi} holds for some positive numbers $a$ and $b$.

It is also easy to verify that if $L$ is a maximal dissipative operator, then an operator $R$ is dominated by $L$ if and only if the operator $R(L+\ri I)^{-1}$ is bounded.

\begin{thm}
\label{imL0}
Let $L_0$ and $L_1$ be 
maximal dissipative operators satisfying {\em{\rf{eq:res,dif-ce,of,trace,cl}}} such that
$\dom(L_0) = {\rm dom}(L_1)$ and $\Ker\im L_j=\{\0\}$, $j=0,1$. 
%and suppose that 
%\begin{equation}
%%\label{eq:res,dif-ce,of,trace,cl}
%(L_1+ \ri I)^{-1} - (L_0 +\ri I)^{-1} \in \bS_1.
%\end{equation}
 Assume also that both forms $\frak T^{\im}_{L_j}$  are closable
 and the operator $(\im L_j)^{1/2}$ is dominated by $L_j + \ri I$ and by $L_j^* - \ri I$, i.e.,
the operators
   \begin{equation}
   \label{eq:im,L_j,sub-d,to_L_j}
 (\im L_j)^{1/2}(L_j  + \ri I)^{-1} \quad \text{and} \quad (\im L_j)^{1/2}(L_j^*  - \ri I)^{-1} \quad  \text{are bounded},  \quad j\in \{0,1\}.
    \end{equation}
Assume in addition that

\bay
\label{Im-1}
(\im L_1)^{-1/2} \big(L_1 - L_0 \big)(\im L_0)^{-1/2} \in \bS_1
\ey
in the sense that the operator in {\em\rf{Im-1}} is densely defined and extends by continuity to a trace class operator.

Then the pair $(L_0,L_1)$ has  a real-valued  
spectral shift function $\bs\xi$ satisfying the condition
\bay
\label{skvadratom}
\int_\R\frac{|\bs{\xi}(t)|}{1+t^2}\,\rd t<\be.
\ey
\end{thm}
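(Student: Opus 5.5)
The plan is to reduce the statement to the contraction case, Theorem \ref{veshch}, by means of the Cayley transform. Put
$$
T_j\df(L_j-\ri I)(L_j+\ri I)^{-1}=I-2\ri(L_j+\ri I)^{-1},\qquad j=0,1.
$$
Since $L_j$ is maximal dissipative, $T_j$ is a contraction. Condition \rf{eq:res,dif-ce,of,trace,cl} gives at once $T_1-T_0=-2\ri\big((L_1+\ri I)^{-1}-(L_0+\ri I)^{-1}\big)\in\bS_1$.

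Next I will compute the defect operators. For $f=(L_j+\ri I)g$ one has
$$
\|f\|^2-\|T_jf\|^2=4\im(L_jg,g)=4\,{\frak T}^{\im}_{L_j}[g].
$$
It follows that $D_{T_j}^2=4\,(L_j^*-\ri I)^{-1}(\im L_j)(L_j+\ri I)^{-1}$ in the form sense, and hence
$$
D_{T_j}=2\,|(\im L_j)^{1/2}(L_j+\ri I)^{-1}|.
$$
The bounded operator $(\im L_j)^{1/2}(L_j+\ri I)^{-1}$ is guaranteed by \rf{eq:im,L_j,sub-d,to_L_j}. There is a partial isometry $V_j$ with $2(\im L_j)^{1/2}(L_j+\ri I)^{-1}=V_jD_{T_j}$. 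Similarly $D_{T_j^*}=2|(\im L_j)^{1/2}(L_j^*-\ri I)^{-1}|$, using ${\frak T}^{\im}_{L_j^*}=-{\frak T}^{\im}_{L_j}$ on the common domain. Since $\Ker\im L_j=\{\0\}$ and the resolvents are injective, we get $\Ker D_{T_0}=\{\0\}$, which is \rf{yadro}.

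Then I will verify \rf{-2a} with $\a=\b=1/2$, so that $\a+\b=1\in(\frac12,1]$. This requires
$$
D_{T_1^*}^{-1}(T_1-T_0)D_{T_0}^{-1}\in\bS_1\quad\mbox{and}\quad D_{T_1}^{-1}(T_1^*-T_0^*)D_{T_0^*}^{-1}\in\bS_1.
$$
The paper's \rf{-2a} writes $D_{T_0}^{-2\a}$ in the second operator, but the proof of Lemma \ref{raz def} effectively uses $D_{T_0^*}$; I would handle whichever form is needed via the intertwining $T_0D_{T_0}=D_{T_0^*}T_0$. Using $\dom(L_0)=\dom(L_1)$ and the resolvent identity,
$$
T_1-T_0=-2\ri(L_1+\ri I)^{-1}(L_0-L_1)(L_0+\ri I)^{-1}.
$$
For the first operator, insert $(\im L_1)^{\pm1/2}$ and $(\im L_0)^{\pm1/2}$ on the two sides:
$$
D_{T_1^*}^{-1}(T_1-T_0)D_{T_0}^{-1}
=c\,\big[D_{T_1^*}^{-1}(L_1+\ri I)^{-1}(\im L_1)^{1/2}\big]\,
\big[(\im L_1)^{-1/2}(L_0-L_1)(\im L_0)^{-1/2}\big]\,
\big[(\im L_0)^{1/2}(L_0+\ri I)^{-1}D_{T_0}^{-1}\big].
$$
The last bracket equals $V_0/2$ on $\Range D_{T_0}$, so it is a bounded partial isometry. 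The first bracket is the adjoint of $(\im L_1)^{1/2}(L_1^*-\ri I)^{-1}D_{T_1^*}^{-1}=V_{1*}/2$, so it is bounded too. The middle factor is trace class by \rf{Im-1}, and therefore the product is in $\bS_1$. The second operator is handled the same way after taking adjoints. Here it helps to write $T_1^*-T_0^*=2\ri(L_1^*-\ri I)^{-1}(L_0^*-L_1^*)(L_0^*-\ri I)^{-1}$ and to note that $\im L^*=-\im L$, so the adjoint of \rf{Im-1} supplies the required trace class factor. Theorem \ref{veshch} (through Lemma \ref{raz def} and Lemma \ref{vspomogalka}) then yields a real-valued $\bs\eta\in L^1(\T)$ with $\trace(g(T_1)-g(T_0))=\int_\T g'\bs\eta\,\rd\z$ for analytic polynomials $g$. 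By the continuity argument of \cite{MNP2}, this extends to rational $g$ with poles outside the closed disc.

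Finally, transport back to $\R$. Given a rational $f$ with poles in the open lower half-plane, set $g=f\circ\omega^{-1}$ with $\omega(t)=(t-\ri)/(t+\ri)$. Then $g$ is rational with poles outside $\overline\dd$, $f(L_j)=g(T_j)$, and the change of variables $\z=\omega(t)$ gives $\bs\xi(t)=\bs\eta(\omega(t))$, possibly up to sign conventions. This $\bs\xi$ is real-valued, and $\rd\z=\frac{2\ri}{(t+\ri)^2}\rd t$ shows that $\int_\T|\bs\eta|\,|\rd\z|<\be$ is equivalent to \rf{skvadratom}. I expect the main obstacle to be the domain bookkeeping in the defect-operator identification. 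One must justify $D_{T_j}=2|(\im L_j)^{1/2}(L_j+\ri I)^{-1}|$ when $\im L_j$ comes from the closure of the form rather than from the operator. One must also show that the factorization above holds on a dense set where \rf{Im-1} is defined, and this is exactly where $\dom(L_0)=\dom(L_1)$ and the domination hypotheses \rf{eq:im,L_j,sub-d,to_L_j} are used.
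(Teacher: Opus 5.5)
Your route is the paper's. You use the Cayley transforms, identify $D_{T_j}$ and $D_{T_j^*}$ through the polar and antipolar decompositions of $(\im L_j)^{1/2}(L_j+\ri I)^{-1}$ and $(L_j+\ri I)^{-1}(\im L_j)^{1/2}$, and sandwich \rf{Im-1} between bounded factors to get $D_{T_1^*}^{-1}(T_1-T_0)D_{T_0}^{-1}\in\bS_1$. You then apply Theorem \ref{veshch} with $\a=\b=1/2$ and set $\bs\xi=\bs\eta\circ\omega$. You are also right that the second operator in \rf{-2a} should carry $D_{T_0^*}^{-2\a}$. That is the operator used in the proof of Lemma \ref{raz def}, and it is the one the paper checks.

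The step that fails as written is your treatment of that second operator. Your own factorization gives
\begin{multline*}
D_{T_1}^{-1}(T_1^*-T_0^*)D_{T_0^*}^{-1}=2\ri\,\big[D_{T_1}^{-1}(L_1^*-\ri I)^{-1}(\im L_1)^{1/2}\big]\\
\times\big[(\im L_1)^{-1/2}(L_0^*-L_1^*)(\im L_0)^{-1/2}\big]\big[(\im L_0)^{1/2}(L_0^*-\ri I)^{-1}D_{T_0^*}^{-1}\big].
\end{multline*}
The middle factor is minus the adjoint of $(\im L_0)^{-1/2}(L_1-L_0)(\im L_1)^{-1/2}$. That is \rf{Im-1} with the indices $0$ and $1$ interchanged, which is not a hypothesis. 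The adjoint of \rf{Im-1} itself is $(\im L_0)^{-1/2}(L_1-L_0)^*(\im L_1)^{-1/2}$. Passing to adjoints moves $\im L_0$ and $\im L_1$ to opposite sides, and the relation $\im L^*=-\im L$ does nothing about that. The paper disposes of this operator with the word ``Similarly'', so you must supply this point yourself.

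The missing idea is that \rf{Im-1}, together with $\dom(L_0)=\dom(L_1)$, makes the two forms equivalent. Let $\Gamma$ be the closure of the operator in \rf{Im-1}. For $f$ in the dense set where \rf{Im-1} is defined,
\[
\Big|\,\|(\im L_1)^{1/2}f\|^2-\|(\im L_0)^{1/2}f\|^2\Big|=\big|\im\big((L_1-L_0)f,f\big)\big|\le\|\Gamma\|\,\|(\im L_0)^{1/2}f\|\,\|(\im L_1)^{1/2}f\|,
\]
which forces $\|(\im L_1)^{1/2}f\|\asymp\|(\im L_0)^{1/2}f\|$. Hence $W=(\im L_1)^{1/2}(\im L_0)^{-1/2}$ is bounded with bounded inverse; compare $K_1$ in step (v) of Section~\ref{applic-n}. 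It follows that
\[
(\im L_0)^{-1/2}(L_1-L_0)(\im L_1)^{-1/2}=W^*\,\Gamma\,W^{-1}\in\bS_1 .
\]
Now use the other form of the resolvent identity, $T_1-T_0=-2\ri(L_0+\ri I)^{-1}(L_0-L_1)(L_1+\ri I)^{-1}$. It writes $D_{T_0^*}^{-1}(T_1-T_0)D_{T_1}^{-1}$ as a product: a bounded factor, then this trace class factor, then another bounded factor. The outer factors are half the partial isometries from the same (anti)polar decompositions as before. The adjoint of this product is the second operator in \rf{-2a}. This holds modulo the same domain bookkeeping you already anticipated.
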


%\begin{cor}
%If both $\im L_0$  and $\im L_1$ are boundedly invertible,
%then condition  \eqref{Im-1} becomes obsolete.
%\end{cor}

{\bf Proof of Theorem  \ref{imL0}.}
Consider the Cayley transforms of $L_0$ and $L_1$ defined by
\begin{equation}
\label{eq:Cayley_transf-s}
T_0 = (L_0 -\ri I)(L_0 + \ri I)^{-1} \quad \text{and} \quad T_1 = (L_1 -\ri I)(L_1 + \ri I)^{-1}.
\end{equation}
Let us show that the contractions $T_0$ and $T_1$ satisfy the hypotheses
of Theorem  \ref{veshch} with $\a = \b =1/2$ and $p=1$.

It follows easily from \eqref{eq:Cayley_transf-s}  that
\begin{eqnarray}
\label{eq:ident_for_T-T_0}
T_1 - T_0 = -2\ri\big((L_1+ \ri I)^{-1} - (L_0 +\ri I)^{-1}\big).
\end{eqnarray}

Combining this identity with \eqref{eq:res,dif-ce,of,trace,cl},
we see that $T_1- T_0 \in \bS_1$.  Put
   \begin{eqnarray}\label{eq:oper,G_j,wtG_j}
G_j = (\im L_j)^{1/2}(L_j + \ri I)^{-1}\quad \text{and} \quad  \widetilde G_j = (L_j +\ri I)^{-1}(\im L_j)^{1/2}, \quad j\in \{0,1\}.
     \end{eqnarray}
By \eqref{eq:im,L_j,sub-d,to_L_j},  the operators $G_j$ are bounded. Since
$$
(L_j +\ri I)^{-1}(\im L_j)^{1/2} \subset  \big((\im L_j)^{1/2}(L_j^*  - \ri I)^{-1}\big)^*,
$$
the operators $(L_j +\ri I)^{-1}(\im L_j)^{1/2}$ are also
bounded.

With this notation, one can easily deduce from \eqref{eq:Cayley_transf-s} that
\begin{align}
\label{eq:ident_for_D_T_0}
D^2_{T_j}&= I - T^*_j T_j  =2\ri\big((L_j +\ri I)^{-1} - (L^*_j -\ri I)^{-1} \nonumber\\
& - 2\ri(L^*_j -\ri I)^{-1}(L_j +\ri I)^{-1}\big)  \nonumber  \\
&= 4\big((L^*_j -\ri I)^{-1} (\im L_j)^{1/2}\big) (\im L_j)^{1/2} (L_j +\ri I)^{-1} \nonumber  \\
&= 4\big((\im L_j)^{1/2}(L_j +\ri I)^{-1}\big)^*
(\im L_j)^{1/2}(L_j +\ri I)^{-1}= 4G_j^* G_j.
 \end{align}
Since $\Ker\im L_j=\{\0\}$, it obviously follows that $\Ker G_j=\Ker G_j^*G_j = \{\0\}$, and so
$T_0$ satisfies  \rf{yadro}.

Next, \eqref{eq:ident_for_D_T_0} implies that $2|G_j| = 2(G_j^* G_j)^{1/2} =  D_{T_j}$, $j\in\{0,1\}$, and hence the polar decomposition of $G_j$  is  $G_j = V_j|G_j| = 2^{-1}V_j D_{T_j}$  where $V_j$, $j\in\{0,1\}$, is unitary.

Similarly, 
\eqref{eq:oper,G_j,wtG_j} that
\begin{align}
\label{eq:ident_for_D_T_j*}
D^2_{T_j^*}& = I - T_j T^*_j   = 2\ri\big((L_j +\ri I)^{-1} - (L^*_j -\ri I)^{-1} \nonumber\\
& - 2(L_j +\ri I)^{-1}\ri(L^*_j -\ri I)^{-1}\big)  \nonumber  \\
&= 4(L_j +\ri I)^{-1} \im L_j (L^*_j -\ri I)^{-1}  \nonumber  \\
&= %%4(L^*_j -\ri I)^{-1}(\im L_j)^{1/2}
4(L_j +\ri I)^{-1}(\im L_j)^{1/2} 
(\im L_j)^{1/2}(L_j^* - \ri I)^{-1}= 4\widetilde G_j \widetilde G_j^* ,
 \end{align}
It follows that $|\widetilde G_j^*| = (\widetilde G_j \widetilde G_j^*)^{1/2} =  2^{-1} D_{T_j^*}$, $j=0,1$.

We also need the  antipolar  representation of $\widetilde G_j$:
$\widetilde G_j = |\widetilde G_j^*| \widetilde V_j = 2^{-1}D_{T_j^*}\widetilde V_j$ (see \cite[Ch. 8]{BS2}, \cite{Ka}). Note that the $\widetilde V_j$, $j\in\{0,1\}$, are unitary. Thus, combining the polar and the antipolar decompositions
for $G_0$  and $\widetilde G_1$ and keeping  \eqref{eq:oper,G_j,wtG_j}  in mind, we easily  obtain
\bay
   \label{nulevoi}
2D_{T_0}^{-1} = G_0^{-1}V_0 = (L_0 + \ri I)(\im L_0)^{-1/2} V_0
\ey
and
\bay
\label{pervyi}
2D_{T_1^*}^{-1} =\widetilde  V_1 (\widetilde G_1)^{-1} = \widetilde  V_1 (\im L_1)^{-1/2}(L_1 +\ri I).
\ey

Multiplying \eqref{eq:ident_for_T-T_0} by $D_{T_1^*}^{-1}$ on the left and by $D_{T_0}^{-1}$
on the right, using  representations \rf{nulevoi} and \rf{pervyi}
for $D_{T_0}^{-1}$ and $D_{T_1^*}^{-1}$ and keeping \eqref{Im-1} in mind, we obtain
\begin{eqnarray*}
D^{-1}_{T_1^*}(T_1 - T_0)D^{-1}_{T_0} = 2^{-1}\ri D^{-1}_{T_1^*} \big((L_1+ \ri I)^{-1} - (L_0 +\ri I)^{-1}\big)D^{-1}_{T_0} \nonumber \\
=  2^{-1}\ri D^{-1}_{T_1^*} (L_1 +\ri I)^{-1} (L_1 - L_0)(L_0 + iI)^{-1}D^{-1}_{T_0}  \nonumber \\
= 2^{-1}\ri \widetilde V_1(\im L_1)^{-1/2} (L_1- L_0) (\im L_0)^{-1/2}V_0 \in\bS_1  
   \end{eqnarray*}
by \rf{Im-1}. Similarly, 
$$
D^{-1}_{T_1}(T_1^* - T^*_0)D^{-1}_{T^*_0}\in\bS_1.
$$
Thus, $T_0$ and $T_1$ satisfy \rf{-2a} with with $\a = \b =1/2$.
By Theorem \ref{veshch}, the pair of contractions
$\{T_0,T_1\}$ has an integrable real-valued spectral shift function $\bs{\xi}_{\rm c}$.
We can define now the function $\bs\xi$ by
$$
\bs{\xi}(t)\df\bs{\xi}_{\rm c}\left(\frac{t-\ri}{t+\ri}\right),\quad t\in\R.
$$
Clearly, \rf{skvadratom} is satisfied and trace formula \rf{fcddo} holds
for an arbitrary rational function $f$ with poles in $\C_-=\{\z\in\C:~\im\z<0\}$. That is, $\bs\xi$ is a real-valued spectral shift function for the pair $\{L_0,L_1\}$.
$\bl$

\medskip

Finally, we deduce
from Theorem 5.6  \cite{MalNei2015} the following result.
%\cite{MN2}.

\begin{thm}
\label{net_takikh}
Suppose that $L_0$ and $L_1$ are maximal dissipative operators satisfying {\em\rf{yadrazn}} and such that
$$
\trace(L_1-L_0)\not\in\R.
$$
Then the pair $(L_0,L_1)$ does not possess a real-valued integrable function.

Moreover, if $L_1-L_0$ is dissipative, then the condition $\trace(L_1-L_0)\in\R$ is equivalent to the existence of an integrable real-valued spectral shift function.
\end{thm}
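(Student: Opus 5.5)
We plan to reduce both assertions to a statement about the imaginary part of a spectral shift function. The key tool is the trace formula \rf{fcddo} applied to a test function that isolates $\trace(L_1-L_0)$.

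\emph{Extracting the trace.} Fix $\z\in\C_-$ and take $f_\z(t)=(\z-t)^{-1}$, or more conveniently $f_\z(t)=\z^2(t-\z)^{-1}$, a rational function with its pole in $\C_-$. The quantity $\trace\big(f(L_1)-f(L_0)\big)$ is computed through the resolvent identity
$$
(L_1-\z I)^{-1}-(L_0-\z I)^{-1}=-(L_1-\z I)^{-1}(L_1-L_0)(L_0-\z I)^{-1}.
$$
Multiplying by $-\z^2$ and letting $\z=-\ri y$ with $y\to+\infty$, we note that $-\ri y(L_j+\ri yI)^{-1}\to I$ strongly, with uniform bound $1$ because $L_j$ is maximal dissipative. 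Since $L_1-L_0\in\bS_1$, this gives
$$
\lim_{y\to\infty}\trace\Big(y^2\big((L_1+\ri yI)^{-1}-(L_0+\ri yI)^{-1}\big)\Big)=\trace(L_1-L_0),
$$
up to a sign that we will fix in the computation.

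On the other side, $f_\z'(t)=\z^2(t-\z)^{-2}$ is bounded by $1$ on $\R$ for $\z=-\ri y$ and tends to $1$ pointwise as $y\to\infty$. Since $\bs\xi\in L^1(\R)$, dominated convergence gives
$$
\int_\R f_\z'(t)\bs\xi(t)\,\rd t\to\int_\R\bs\xi(t)\,\rd t.
$$
Consequently $\trace(L_1-L_0)=\pm\int_\R\bs\xi\,\rd t$ for every integrable spectral shift function $\bs\xi$. If $\bs\xi$ is real-valued, then $\trace(L_1-L_0)\in\R$, which proves the first assertion.

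\emph{The converse when $L_1-L_0$ is dissipative.} Suppose $L_1-L_0$ is dissipative and $\trace(L_1-L_0)\in\R$. Writing $K=L_1-L_0\in\bS_1$, the relation $\im\trace K=\trace\frac{1}{2\ri}(K-K^*)=0$ holds, and $\frac1{2\ri}(K-K^*)\ge\0$ is trace class. Hence $K-K^*=\0$, so $L_1=L_0+K$ with $K$ a self-adjoint trace class operator. In this situation Theorem~5.6 of \cite{MalNei2015} provides a real-valued integrable spectral shift function; this is the self-adjoint-perturbation case treated there, the analogue of Lemma~9.1 in \cite{MNP2}. If needed, we can realize this by passing to Cayley transforms, $T_1=UT_0$-type relations, or perturbation determinants.

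\emph{Main obstacle.} The main difficulty is the justification in the first step. One must check that the limit interchange for the trace is legitimate and that the chosen rational $f$ belongs to the admissible class in \rf{fcddo}; a function such as $f_\z$ with the factor $\z^2$ is fine, since it is a rational function with its pole in $\C_-$. The trace convergence follows from the standard fact that strong convergence of uniformly bounded operators on both sides of a trace class operator yields convergence in $\bS_1$.
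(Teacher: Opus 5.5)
Your proof of the first assertion follows the paper's proof. You apply the trace formula to $f_\z$ (equivalently \eqref{eq:trV=int,xi}), put $\z=-\ri y$, multiply by $y^2$ and let $y\to\infty$, obtaining $\trace(L_1-L_0)=\int_\R\bs\xi(t)\,\rd t$ for every integrable spectral shift function. You also supply justifications the paper omits: the bound $\|\ri y(L_j+\ri yI)^{-1}\|\le1$, strong convergence, and dominated convergence.

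Two small repairs are needed.
\begin{itemize}
\item Your signs are off: in fact $\ri y(L_j+\ri yI)^{-1}\to I$ and $f_\z'(t)=-\z^2(t-\z)^{-2}\to-1$. This is harmless.
\item Your ``standard fact'' is false as stated. Strong convergence of the right-hand factors does not give $\bS_1$-convergence: take $V$ of rank one and $B_n$ the $n$-th power of the backward shift. You also need $B_y^*\to I$ strongly, which does hold here since $-\ri y(L_0^*-\ri yI)^{-1}\to I$. More simply, write $\trace(A_yVB_y)=\trace(B_yA_yV)$ and use $B_yA_y\to I$ strongly.
\end{itemize}

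For the converse under dissipativity, the paper says only that it follows from this identity together with Theorem~5.6 of \cite{MalNei2015}. Your observation that $\im V\ge\0$ and $\trace(\im V)=\im\trace V=0$ force $V=V^*$ is correct. It also pins down exactly what has to be imported: a real integrable spectral shift function for a self-adjoint trace class perturbation of a maximal dissipative operator. So both arguments rely on the cited work for the existence half, and your proof is complete to the same extent as the paper's.

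Note that your identity already does part of this job. Any integrable spectral shift function whose imaginary part has constant sign is automatically real when $\trace V\in\R$, because $\int_\R\im\bs\xi=\im\trace V=0$.

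Drop the ``if needed'' fallbacks, which are not arguments as they stand. For instance, the Cayley transforms of $L_0$ and $L_0+V$ are in general not related by $T_1=UT_0$. Take $L_0=\ri$ on $\C$ and $V=v\in\R\setminus\{0\}$: then $T_0=0$ while $T_1=v(v+2\ri)^{-1}\ne0$.
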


Before we proceed to the proof, let us write the following special case of trace formula \rf{fcddo}:
\bay
\label{fcddo,new}
\trace\big((L_1- zI)^{-1} - (L_0 - zI)^{-1}\big) = \int_\R \frac{\bs{\xi}(t)}{(t-z)^2}\,\rd t
\ey

\Pf Put $V\df L_1-L_0$.

It is easy to see that under \rf{yadrazn} identity \eqref{fcddo,new} implies that
\bay
\label{eq:trV=int,xi}
\trace\big((L_1- zI)^{-1} V (L_0 - zI)^{-1}\big) = \int_\R \frac{\bs{\xi}(t)}{(t-z)^2}\,\rd t.
\ey
Setting here  $z=-iy$, multiplying both sides by $y^2$ and passing to the limit as $y\to\infty$, we find that
$$
\trace V  = \int_\R \bs{\xi}(t)\,\rd t.
$$
Both conclusions in the statement of the theorem follow immediately from this formula. $\bl$
  
\

\section{\bf An application to dissipative Schr\"odinger operators}
\label{applic-n}

\

In this section we apply  Theorem  \ref{imL0}  to the pair $\{L_0, L_1\}$ of maximal dissipative  Schr\"odinger operators, where
     \begin{equation}
     \label{eq:def,of,Schr,oper-s}
L_0 = (1 + \ri)D^2 + \ri I \qquad \text{ and} \qquad L_1 = (1 + \ri)D^2 + M_{\ri+q}
    \end{equation}
in $L^2(\Bbb R)$ such that the potential $\ri+q$ satisfies  $\im q\ge\0$. Here $D= -\ri \frac{\rd}{\rd x}$ and by $M_\f$ we understand multiplication by $\f$, i.e., 
$$
M_\f f=\f f.
$$
In other words,
$$
L_1 f=-(1+\ri)f''+(\ri+q)f.
$$

For a compact operator $T$ on Hilbert space, we consider the sequences $\{s_j(T)\}_{j\ge0}$ and 
$\{\l_j(T)\}_{j\ge0}$
the sequences of singular values and the sequence of eigenvalues of $T$ counted with algebraic multiplicities and we arrange them so that
$$
s_{j+1}\le s_j\quad\mbox{and}\quad|\l_{j+1}|\le|\l_j|,\quad j\ge0.
$$
If $T$ has finitely many eigenvalues, we extend the finite sequence of its eigenvalues with infinitely many zero terms.

We are going to use the following fact (see \cite{BS2}, \S\:3.10, Th. 2):

Let $A$ and $B$ be compact operators on Hilbert space. Then
$$
\l_j(AB)=\l_j(BA),\quad j\ge0.
$$

\begin{lem}
Suppose that $T_1$ and $T_2$ are compact self-adjoint operators on Hilbert space.
Then 
$$
s_j(T_1T_2) = s_j(T_2 T_1),\quad j\ge0.
$$ \end{lem}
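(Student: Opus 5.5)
Since $T_1$ and $T_2$ are self-adjoint, we have $(T_1T_2)^*=T_2T_1$. It therefore suffices to show that an arbitrary compact operator $A$ and its adjoint $A^*$ have the same singular values; applying this with $A=T_1T_2$ gives the lemma. We reduce this to the fact quoted just above from \cite{BS2}, namely $\l_j(AB)=\l_j(BA)$ for compact $A$ and $B$.

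By definition $s_j(A)=\l_j\big((A^*A)^{1/2}\big)$. Equivalently, $s_j(A)^2=\l_j(A^*A)$, because $A^*A$ is compact, nonnegative and self-adjoint, so its eigenvalues are nonnegative and already ordered decreasingly in modulus. Likewise $s_j(A^*)^2=\l_j(AA^*)$. Apply the quoted fact with the compact operators $A^*$ and $A$ to get
$$
\l_j(A^*A)=\l_j(AA^*),\quad j\ge0.
$$
Hence $s_j(A)^2=s_j(A^*)^2$, and taking nonnegative square roots gives $s_j(A)=s_j(A^*)$. With $A=T_1T_2$ and $A^*=T_2T_1$ this is exactly the claim.

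The main point to check is that the conventions agree. The eigenvalue sequences are counted with algebraic multiplicity and padded with zeros. For the nonnegative self-adjoint operators $A^*A$ and $AA^*$, algebraic and geometric multiplicities coincide, so these sequences are precisely the squared singular-value sequences, including the zero padding when the rank is finite. The quoted fact is stated in exactly these terms, so no further work is needed. One could also avoid it altogether by using the polar decomposition $A=U|A|$, which gives $AA^*=U(A^*A)U^*$ with $U$ a partial isometry, so the nonzero eigenvalues of $AA^*$ and $A^*A$ coincide with multiplicities.
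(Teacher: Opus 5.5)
Your proof is correct and uses the same key ingredient as the paper, namely $\lambda_j(AB)=\lambda_j(BA)$, to compare $\lambda_j(A^*A)$ with $\lambda_j(AA^*)$ for $A=T_1T_2$. The paper routes both quantities through $\lambda_j(T_1^2T_2^2)$, using self-adjointness in each chain, whereas you apply the identity once to the pair $A^*$, $A$; this makes explicit that the lemma is just $s_j(A)=s_j(A^*)$ combined with $(T_1T_2)^*=T_2T_1$.
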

\Pf
Clearly,
$$
\lambda_j\left((T_1T_2)^*(T_1T_2)\right) = \lambda_j\left(T_2 T_1^2T_2\right) = \lambda_j\left(T_1^2T_2^2\right)
$$
Similarly, 
$$
\lambda_j\left((T_1T_2)(T_1T_2)^*\right) = \lambda_j\left(T_1 T_2^2T_1\right) = \lambda_j\left(T_1^2T_2^2\right)
$$
Thus,
$s_j(T_1T_2)^2 = \lambda_j\left((T_1T_2)^*(T_1T_2)\right)=s_j(T_2T_1)^2$. $\bl$

\medskip

We need the following result. 

\begin{thm}
\label{Kuzya}
Let $q$ be a nonnegative function in $L^1(\R)\cap L^2(\R)$ and let $R$ be a continuous positive-definite function on $\R\times\R$.
Suppose that
$$
\int_\R q(x)R(x,x)\,\rd x<\be.
$$
Then the integral operator $K$ on $L^2(\R)$ defined by
$$
(Kf)(s)=\int_\R q(s)^{1/2}R(s,t)q(t)^{1/2}f(t)\,\rd t
$$
is a nonnegative operator of trace class and
$$
\|K\|_{\bS_1}=\trace K=\int_\R q(x)R(x,x)\,\rd x<\be.
$$
\end{thm}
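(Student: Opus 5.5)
The plan is to factor $K$ as $K=AA^*$ with $A$ a Hilbert--Schmidt operator. Then $K\ge\0$ and $K\in\bS_1$ come for free, and the trace reduces to a Hilbert--Schmidt norm.

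Since $R$ is continuous and positive definite on $\R\times\R$, it is the reproducing kernel of a Hilbert space $\h_R$ of continuous functions. Equivalently, there is a Hilbert space $\mathcal G$ and a weakly continuous map $x\mapsto \f_x\in\mathcal G$ with $R(s,t)=(\f_t,\f_s)_{\mathcal G}$; one can take $\f_x=R(\cdot,x)\in\h_R$, which gives $\|\f_x\|^2=R(x,x)$.

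Define $A:\mathcal G\to L^2(\R)$ by
$$
(Ag)(s)=q(s)^{1/2}(g,\f_s)_{\mathcal G}.
$$
For an orthonormal basis $\{e_k\}$ of $\mathcal G$ (separable, because $R$ is continuous and $\R$ is separable), Parseval and monotone convergence give
$$
\sum_k\|Ae_k\|^2=\int_\R q(s)\sum_k|(e_k,\f_s)|^2\,\rd s=\int_\R q(s)R(s,s)\,\rd s<\be.
$$
Hence $A\in\bS_2$. Note also $R(s,s)\ge0$, so the integrand is nonnegative. Next compute $A^*f=\int_\R q(t)^{1/2}f(t)\f_t\,\rd t$, a weak integral that converges because $\int q(t)R(t,t)\,\rd t<\be$ and Cauchy--Schwarz give $\int q^{1/2}|f|\,\|\f_t\|\,\rd t<\be$. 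Then
$$
(AA^*f)(s)=\int_\R q(s)^{1/2}(\f_t,\f_s)\,q(t)^{1/2}f(t)\,\rd t=(Kf)(s).
$$
So $K=AA^*\ge\0$, $K\in\bS_1$, and
$$
\|K\|_{\bS_1}=\trace K=\|A\|_{\bS_2}^2=\int_\R q(x)R(x,x)\,\rd x.
$$

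The main technical obstacle is the measurability and rigor of the feature map and the weak integrals. Concretely, I must check that $s\mapsto(g,\f_s)$ is measurable, which holds because it is continuous for $g$ in the span of the $\f_x$ and hence for all $g$ by density. I must also check that the interchange of sum and integral is justified, which follows from Tonelli. If one prefers to avoid reproducing kernel spaces, the fallback is Mercer-type approximation: restrict to $[-N,N]$, where $q^{1/2}Rq^{1/2}$ is a nonnegative kernel, and apply the factorization there. Then let $N\to\be$, using monotonicity of the traces and the $\bS_1$-convergence of the truncations, which is controlled by the tails $\int_{|x|>N}qR(x,x)$. The hypotheses $q\in L^1\cap L^2$ are not needed beyond guaranteeing that $K$ is densely defined.
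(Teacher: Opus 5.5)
Your proof is correct, and it takes a genuinely different route from the paper's.

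\textbf{The paper's route.} Both of the paper's proofs rely on approximation followed by a limit passage.
\begin{itemize}
\item The first proof cuts $R$ and $q$ off to compact support. It then mollifies the kernel along the diagonal to get smooth, compactly supported positive-definite kernels $\Phi_n$ with known traces. It deduces $K\in\bS_1$ from $\bS_2$-convergence plus bounded traces, and needs Lemma~\ref{Puzya} to upgrade to $\bS_1$-convergence and so obtain the trace identity.
\item The second proof approximates $q$ from below by continuous functions $\varphi_n$ on a compact interval. It uses the classical trace result for continuous nonnegative kernels, then passes to the limit via the lemma on sequences with monotone moduli.
\end{itemize}

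\textbf{Your route.} The reproducing-kernel (Kolmogorov) factorization $K=AA^*$ computes $\|A\|_{\bS_2}^2=\int_\R q(x)R(x,x)\,\rd x$ exactly by Parseval and Tonelli. This gives nonnegativity, $K\in\bS_1$ and the trace formula in one stroke. It needs no Mercer-type theorem, no reduction to compact support and no limiting argument. The paper's approach has only the modest advantage of resting on standard facts about integral operators with continuous or smooth kernels.

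\textbf{Smaller points.}
\begin{itemize}
\item Your argument shows that the hypothesis $q\in L^1(\R)\cap L^2(\R)$ is not needed at all; measurability of $q\ge 0$ and $\int_\R qR(x,x)\,\rd x<\be$ suffice. Since $A\in\bS_2$ and $|R(s,t)|\le\|\f_s\|\,\|\f_t\|$, the factorization already makes $K$ everywhere defined and bounded. So your closing remark about dense definition undersells your own proof.
\item The map $x\mapsto\f_x$ is in fact norm continuous, because $\|\f_x-\f_y\|^2=R(x,x)-2\re R(x,y)+R(y,y)$. This gives separability and measurability at once, and the truncation fallback is unnecessary.
\end{itemize}

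\textbf{What your picture says about the paper's second proof.} Let $A_n$ be your $A$ with $q$ replaced by the approximants $\varphi_n$. Then $A_n^*A_n$ increases with $n$ and has the same nonzero eigenvalues as $K_n=A_nA_n^*$. By contrast, $K_n$ itself need not increase in operator order; rank-one kernels already show this. Moreover, $\|A-A_n\|_{\bS_2}\to0$ gives $K_n\to K$ in $\bS_1$ directly.
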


Note that in the case when the function $q$ is continuous the result is well-known, see e.g. \cite[Section 3.10]{GK}. We give two different proofs of Theorem \ref{Kuzya}.

\medskip

{\bf The first proof of Theorem \ref{Kuzya}.} First of all, it suffices to assume that both $R$ and $q$ are compactly supported. Indeed, let $w$ be a continuous function $w$ on $\R$ such that
$$
0\le w(t)\le1\quad\mbox{for}\quad t\in\R;\qquad\supp w=[-2,2]\qquad\mbox{and}
\qquad w(t)=1\quad\mbox{for}\quad t\in[-1,1].
$$
We can define the operators $K_n$, $n\ge1$ by
$$
(K_nf)(s)=\int_\R q(s)^{1/2}R_n(s,t)q(t)^{1/2}f(t)\,\rd t,
$$
where
$$
R_n(s,t)\df w\Big(\frac sn\Big)R(s,t)w\Big(\frac tn\Big).
$$
Clearly, $R_n$ is a positive definite function with compact support.

By the assumption that the conclusion of the theorem holds for compactly supported $R$ and $q$, we can conclude that $K_n\in\bS_1$ and
$$
\|K_n\|_{\bS_1}=\trace K_n=\int_\R q(x)R_n(x,x)\,\rd x<\be.
$$
It is easy to se that the sequence $\{K_n\}_{n\ge1}$ converges in the weak operator topology and
$\mbox{w-}\lim_{n\to\be}K_n=K$. Also, it is clear that $(K_nf,f)\le(Kf,f)$ for $f\in L^2(\R)$ and $n\ge1$.
It follows that
$$
\lim_{n\to\be}\trace K_n=\lim_{n\to\be}\sum_{j\ge1}(K_nf_j,f_j)=\sum_{j\ge1}(Kf_j,f_j)=\trace K,
$$
where $\{f_j\}_{j\ge1}$ is an orthonormal basis in $L^2(R)$.

Thus, we assume that both $R$ and $q$ are compactly supported. It follows that the operator $K$ belongs to the Hilbert--Schmidt class $\bS_2$. Let us establish that $K\in\bS_1$.

Consider an infinitely smooth function $v$ on $\R$ such that
$$
v(t)\ge0\quad\mbox{for}\quad t\in\R;\qquad\supp v=[-1,1],\qquad\mbox{and}\qquad\int_\R v(t)\rd t=1.
$$
Put $V_n(t)=nv(nt)$, $t\in\R$. Consider the function $\Phi_n$ on $\R^2$ defined by
$$
\Phi_n(s,t)=\int_\R V_n(u)q^{1/2}(s-u)R(s-u,t-u)q^{1/2}(t-u)\,\rd u.
$$
It is easy to see that $\Phi_n$ is a compactly supported infinitely smooth function and $\Phi_n$ is positive definite for every $n$.

Consider now the integral operator $T_n$ on $L^2(\R)$ defined by
$$
(T_nf)(s)=\int_\R\Phi_n(s,t)f(t)\,\rd t.
$$
Since $\Phi_n$ is a compactly supported infinitely smooth function, it follows that $T_n\in\bS_1$ for every $n\ge1$. It follows from \cite{B} that
$$
\trace T_n=\|T_n\|_{\bS_1}=\int_\R\Phi_n(s,s)\,\rd{s}.
$$
It is easy to verify that
$$
\lim_{n\to\be}\int_\R\Phi_n(s,s)\,\rd{s}=\int_\R q(s)R(s,s)\,\rd x.
$$
It is also clear that
$$
\lim_{n\to\be}\|K-T_n\|_{\bS_2}=0.
$$
This easily implies that $K\in\bS_1$.

Once we know that $K\in\bS_1$, we can easily conclude that
\bay
\label{yaderno!}
\lim_{n\to\be}\|K-T_n\|_{\bS_1}=0.
\ey

Indeed, this is a consequence of the following lemma.

\begin{lem}
\label{Puzya}
Let $R$ be a trace class operator on $L^2(\R)$ and
$$
(Rf)(s)=\int_\R k(s,t)f(t)\,\rd t,
$$
for a function $k$ in $L^2(\R^2)$. Let $R_n$, $n\ge1$, be the operators defined by
$$
(R_nf)(s)=\int_\R k_n(s,t)f(t)\,\rd t,
$$
where
$$
k_n(s,t)=\int_\R V_n(u)k(s-u,t-u)\,\rd u.
$$
Then $R_n\in\bS_1$ and
$$
\lim_{n\to\be}\|R_n-R\|_{\bS_1}=0.
$$
\end{lem}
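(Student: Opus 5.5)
The plan is to write $R_n$ as an average of unitarily conjugated copies of $R$, and then use the continuity of translations in $\bS_1$. Let $(\tau_u f)(s)=f(s-u)$ denote the translation group on $L^2(\R)$; it is a strongly continuous unitary group. A direct change of variables shows that the kernel of $\tau_u R\tau_{-u}$ is $k(s-u,t-u)$. Hence, at least formally,
$$
R_n=\int_\R V_n(u)\,\tau_uR\tau_u^{*}\,\rd u .
$$
The first step is to justify this identity. Take $f,g$ in a dense class, for instance continuous compactly supported functions. Then $(R_nf,g)=\int_\R V_n(u)(\tau_uR\tau_u^*f,g)\,\rd u$ follows from Fubini's theorem, because $k\in L^2(\R^2)$ and $V_n$ is bounded with compact support.

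The second step is to show that $u\mapsto\tau_uR\tau_u^*$ is a continuous map from $\R$ into $\bS_1$. If $R$ has finite rank, $R=\sum_i(\cdot,\psi_i)\f_i$, then $\tau_uR\tau_u^*=\sum_i(\cdot,\tau_u\psi_i)\tau_u\f_i$. Its $\bS_1$-continuity follows from the strong continuity of $\tau_u$ together with the bound $\|(\cdot,a)b\|_{\bS_1}\le\|a\|\|b\|$. For a general $R\in\bS_1$, approximate it in $\bS_1$ by finite rank operators $F$. Since conjugation by unitaries is an $\bS_1$-isometry, we have $\|\tau_u(R-F)\tau_u^*\|_{\bS_1}=\|R-F\|_{\bS_1}$ uniformly in $u$, and an $\e/3$ argument finishes this step.

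With continuity in hand, the integral $\int V_n(u)\tau_uR\tau_u^*\,\rd u$ exists as an $\bS_1$-valued Bochner integral. By the first step it equals $R_n$, since the two operators agree weakly on a dense set. Therefore $R_n\in\bS_1$, with $\|R_n\|_{\bS_1}\le\|R\|_{\bS_1}$. Because $V_n\ge0$, $\int V_n=1$ and $\supp V_n\subset[-1/n,1/n]$, we get
$$
\|R_n-R\|_{\bS_1}\le\int_\R V_n(u)\,\|\tau_uR\tau_u^*-R\|_{\bS_1}\,\rd u\le\sup_{|u|\le1/n}\|\tau_uR\tau_u^*-R\|_{\bS_1}\to0,
$$
again by the continuity established in the second step.

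The main technical point is that continuity step, i.e., the $\bS_1$-continuity of conjugation by the translation group. The finite rank approximation combined with unitary invariance of the trace norm should settle it cleanly. The identification of the Bochner integral with the kernel operator having kernel $k_n$ is routine bookkeeping.
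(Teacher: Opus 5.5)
Your proof is correct and follows the same idea as the paper's one-line argument: reduce to finite rank operators, where the statement follows from the strong continuity of translations. Your representation $R_n=\int_\R V_n(u)\,\tau_uR\tau_u^*\,\rd u$ also makes explicit the uniform bound $\|R_n\|_{\bS_1}\le\|R\|_{\bS_1}$, which the paper's ``it suffices to prove the result for rank one operators'' tacitly needs in order to pass to general trace class $R$.
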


Let us first complete the proof of Theorem \ref{Kuzya}. Since $\Phi_n$ is a smooth function with compact support, it is well known (see \cite{B}), that
$$
\trace T_n=\int_\R\Phi_n(s,s)\,\rd s.
$$
To complete the proof, we observe that it follows from \rf{yaderno!} that
$$
\trace K=\lim_{n\to\be}\trace T_n.\quad\bl
$$

\medskip

{\bf Proof of Lemma \ref{Puzya}.} Clearly, it suffices to prove the result for rank one operators $R$ in which case the result is trivial. $\bl$

\medskip

Let us proceed now to the second proof of Theorem \ref{Kuzya}.

First we complement Theorem 3.5.1 from \cite{GK}. Suppose that $\{A_n\}_1^\infty$ is a sequence of operators on Hilbert space of class $\bS_p$, $1\le p<\be$, that converges in the norm to a bounded linear operator $A$ and
 \begin{equation}
   \label{eq:sup,A_n<infty}
 \sup_n \|A_n\|_{\bS_p} < \infty.
   \end{equation}
It is well known that in this case $A$ must be in $\bS_p$ and its $\bS_p$ norm is less than or equal to the supremum in 
\rf{eq:sup,A_n<infty}. The following lemma shows that if we in addition to this assume that the sequence $\{|A_n|\}_1^\infty$ of the moduli of $A_n$ is monotone, we can make a stronger conclusion.

\begin{lem}\label{lem:conv,S_1-pos,oper}
Suppose that in addition to the above assumptions the sequence $\{|A_n|\}_1^\infty$ is monotone. Then
    \begin{equation}\label{eq:|A_n-A|,to,0}
\lim_{n\to \infty} \|A - A_n\|_{\bS_p} 
= 0.
   \end{equation}
Moreover, if the operators $A_n$ are nonnegative, then

   \begin{equation}
   \label{eq:A_n>0,|A|_p=sum,l_j}
\lim_{n\to \infty} \|A_n\|_{\bS_p} = \|A\|_{\bS_p}% = \left(\sum_j \lambda_{j}(A)^p\right)^{1/p} = (\tr A^p)^{1/p}.
   \end{equation}
\end{lem}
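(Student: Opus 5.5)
The plan is to reduce everything to the convergence of the norms $\|A_n\|_{\bS_p}\to\|A\|_{\bS_p}$ and then to invoke a Grümm-type result.

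The first step is to pass from $A_n$ to the moduli. Since $A_n\to A$ in operator norm, $A_n^*A_n\to A^*A$ in norm. The square root is operator-norm continuous on nonnegative operators, so $|A_n|\to|A|$ in norm. In particular the monotonicity of $\{|A_n|\}$ passes to the limit.
\begin{itemize}
\item If $\{|A_n|\}$ is increasing, then $|A_n|\le|A_m|$ for $m\ge n$, and letting $m\to\infty$ gives $|A_n|\le|A|$.
\item If $\{|A_n|\}$ is decreasing, then $|A|\le|A_n|\le|A_1|$ for all $n$.
\end{itemize}
Moreover $|s_j(A_n)-s_j(A)|\le\|A_n-A\|$, so $s_j(A_n)\to s_j(A)$ for every $j$. (As recalled before the lemma, $A\in\bS_p$ by the uniform bound \rf{eq:sup,A_n<infty}.)

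The second step establishes $\|A_n\|_{\bS_p}\to\|A\|_{\bS_p}$ in each case, using the minimax principle $s_j(B)\le s_j(C)$ whenever $\0\le B\le C$.
\begin{itemize}
\item In the increasing case, $s_j(A_n)\le s_j(A)$, so $\|A_n\|_{\bS_p}\le\|A\|_{\bS_p}$. Fatou's lemma applied to $\sum_j s_j(A_n)^p$ gives $\liminf_n\|A_n\|_{\bS_p}\ge\|A\|_{\bS_p}$.
\item In the decreasing case, $s_j(A_n)^p\le s_j(A_1)^p$, which is summable because $A_1\in\bS_p$. Dominated convergence then gives $\sum_j s_j(A_n)^p\to\sum_j s_j(A)^p$.
\end{itemize}
Since nonnegative operators coincide with their moduli, this step already proves the second assertion \rf{eq:A_n>0,|A|_p=sum,l_j} for nonnegative $A_n$.

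The last step, and the main obstacle, is to upgrade norm convergence together with convergence of $\bS_p$-norms to convergence in $\bS_p$. Here I would invoke Grümm's theorem (see, e.g., Simon, \emph{Trace ideals}, Theorem 2.19). It states that if $A_n\to A$ and $A_n^*\to A^*$ strongly, and $\|A_n\|_{\bS_p}\to\|A\|_{\bS_p}$ with $1\le p<\infty$, then $\|A_n-A\|_{\bS_p}\to0$. Operator-norm convergence clearly supplies both strong convergences.

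If a self-contained argument is preferred for $p=1$, I would argue directly with the moduli. In the monotone case $|A|-|A_n|$ (or $|A_n|-|A|$) is nonnegative, so its trace norm equals $\pm(\trace|A|-\trace|A_n|)\to0$, giving $|A_n|\to|A|$ in $\bS_1$. Writing $A_n=U_n|A_n|$ and $A=U|A|$ in polar form, I would then use the estimate
\[
\|A_n-A\|_{\bS_1}\le\big\||A_n|-|A|\big\|_{\bS_1}+\|(U_n-U)|A|\|_{\bS_1}.
\]
The main risk is the last term, since $U_n$ need not converge in norm. To handle it, I would approximate $|A|$ in $\bS_1$ by a finite-rank spectral truncation and use that $U_n|A|\,P\to U|A|\,P$ in norm on the range of each finite-rank spectral projection $P$ of $|A|$. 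This follows because $U_n|A_n|\to U|A|$ and $|A_n|\to|A|$ in norm.
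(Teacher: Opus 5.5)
Your proof is correct. For the norm identity it is essentially the paper's argument. For the convergence $\|A-A_n\|_{\bS_p}\to0$ it takes a genuinely different route.

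For the norm identity, both arguments use $|A_n|\le|A|$ (resp.\ $|A|\le|A_n|\le|A_1|$) and $s_j(A_n)\to s_j(A)$, then pass to the limit in $\sum_j s_j(A_n)^p$ by monotone or dominated convergence. The paper writes out only the increasing case.

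For $\|A-A_n\|_{\bS_p}\to0$, the paper reruns the same argument with $A-A_n$ in place of $A_n$. This tacitly requires $\{|A-A_n|\}$ to be monotone. That holds when the $A_n$ are self-adjoint and monotone in the operator order, e.g.\ $\0\le A_n$ increasing, where $|A-A_n|=A-A_n$ decreases. It does not follow from monotonicity of $|A_n|$ alone. For example, take $A_nx=(x,e_1)u_n$ with unit vectors $u_n\to u$. Then $|A_n|$ is constant, while $|A-A_n|=\|u-u_n\|\,(\cdot,e_1)e_1$ need not be monotone.

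You avoid this problem by using monotonicity only for the moduli and then transferring the conclusion to $A_n$. You offer two ways to do the transfer. The first is Gr\"umm's theorem, a Kadec--Klee type fact that is genuinely nontrivial for $p=1$. The second is your polar-decomposition argument. Its key estimate
\[
\|(U_n-U)|A|\|\le\|A_n-A\|+\||A_n|-|A|\|,
\]
combined with a finite-rank spectral truncation of $|A|$, is sound.

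Your self-contained argument is not actually limited to $p=1$. Since $\0\le|A|-|A_n|\le|A|$ (resp.\ $\0\le|A_n|-|A|\le|A_1|-|A|$) and $\||A_n|-|A|\|\to0$, dominated convergence applied to $s_j(|A|-|A_n|)^p$ gives $\||A_n|-|A|\|_{\bS_p}\to0$ for every $p\ge1$. The truncation step then goes through verbatim using $\|F\|_{\bS_p}\le(\operatorname{rank}F)^{1/p}\|F\|$, so you can dispense with Gr\"umm entirely.

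In short, the paper's proof is shorter and adequate for nonnegative monotone $A_n$, which is the case it has in mind. Yours proves the lemma in the generality in which it is stated.
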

\Pf
%It is well known (see e.g. .....) that
%$$
%|s_{j}(A_n) - s_{j}(A)| \le \|A_n - A\| \quad \text{for each}\quad j, n\in \Bbb N.
%%% |\lambda_{j}(A_n) - \lambda_{j}(A)|
%$$
%Therefore for each fixed $j\in \Bbb N$ the uniform convergence $\lim_{n\to\infty}\|A_n - A\| =0$ implies
%$\lim_{n\to \infty} s_{j}(A_n) = s_{j}(A)$.
%Applying the Fatou lemma and using  \eqref{eq:sup,A_n<infty},  we obtain
%$$
%\sum_{j\in \Bbb N} s_{j}(A)^p = \sum_{j\in \Bbb N}\lim_{n\to \infty} s_{j}(A_n) \le 
%{\lim \inf}_{n\to \infty}\sum_{j\in \Bbb N}s_{j}(A_n)^p \le  
%\sup_n \sum_{j\in \Bbb N} s_{j}(A_n)^p < \infty.
%$$
%
Assume to be definite that the sequence $\{|A_n|\}$ is increasing. Then $s_{j}(A_n) \uparrow s_{j}(A)$ for each $j\in \Bbb N$ 
as $n\to\infty$.
Therefore, applying the monotone convergence theorem, we obtain \rf{eq:A_n>0,|A|_p=sum,l_j}.
 
It we replace now $A_n$ with $A - A_n$  and observe that the resulting sequence is also monotone,
we arrive at \eqref{eq:|A_n-A|,to,0}. $\bl$

%////////////////////////  MMM  //////////////  MMM  /////////   \\

%\begin{thm}\label{KuzyaNew}
%Let $q$ be a nonnegative function in $L^1(\R)\cap L^2(\R)$ and let $R$ be a continuous positive-definite function on $\R\times\R$.
%%%Suppose that
%%%$$
%%%\int_\R q(x)R(x,x)\,\rd x<\be.
%%$$
%Then the integral operator $K$ on $L^2(\R)$ defined by
%$$
%(Kf)(s)=\int_\R q(s)^{1/2}R(s,t)q(t)^{1/2}f(t)\,\rd t
%$$
%is a nonnegative operator of trace class and
%$$
%\|K\|_{\bS_1}=\trace K=\int_\R q(x)R(x,x)\,\rd x<\be.
%$$
%\end{thm}

\medskip

{\bf The second proof of Theorem \ref{Kuzya}.}
It suffices to assume that the function $q$ is supported on a compact interval $[a,b]$.
Choose a monotone sequence $\varphi_n(\cdot)\in C[a,b]$  approximating $q$ from below in $L^1[a,b]$:
  \begin{equation}\label{monot,seq,phi_n}
\varphi_n(\cdot)\in C[a,b], \quad \varphi_n(t)\le \varphi_{n+1}(t) \le q(t)\quad and \quad \lim_{n\to \infty}\|\varphi_n - q\|_{L^1}=0
   \end{equation}

Define the operators $K_n$, $n\ge1$, by setting
     \begin{equation}\label{eq:def,K_n}
(K_nf)(s) = \int_\R \varphi_n(s)^{1/2}R(s,t)\varphi_n(t)^{1/2}f(t)\,\rd t,
   \end{equation}
   
Clearly,
   \begin{equation}\label{eq:K_n,to,K,in,S_2-norm}
%%\lim_{n\to\be}
\|K_n - K\|_{\bS_2}^2 = \int_a^b\int_a^b |(\varphi_n(s) \varphi_n(t))^{1/2} - ( q(s)q(t))^{1/2}|^2\cdot|R(s,t)|^2\,\rd s\,\rd t \to 0
\quad \text{as}\quad n\to \infty.
   \end{equation}
Further, since the integral operator $K_n$ given by \eqref{eq:def,K_n} is non-negative with continuous kernel, and
$\int_a^b \varphi_n(t)R(t,t)\rd t < \infty$, it is  of trace
 class,  $K_n \in \bS_1$,  and
      \begin{equation}\label{eq:S_1-norm,of,K_n}
\|K_n\|_{\bS_1} = \tr K_n = \int_a^b \varphi_n(t)R(t,t)\rd t, \quad n\in \Bbb N
      \end{equation}
(see \cite[Chapter 3.10]{GK}).
In turn, it follows from  \eqref{eq:S_1-norm,of,K_n} and  \eqref{monot,seq,phi_n} that
      \begin{equation}\label{eq:S_1-norm,of,K_n,second}
\sup_n \|K_n\|_{\bS_1} = \sup_n\int_a^b \varphi_n(t)R(t,t)\rd t = \int_a^b q(t)R(t,t)\rd t < \infty.
    \end{equation}
Combining this estimate with relation \eqref{eq:K_n,to,K,in,S_2-norm} and applying \cite[Theorem 3.5.1]{GK} yields
the inclusion $K \in \bS_1$ and the following estimate
$$
\|K\|_{\bS_1} \le \sup_n \|K_n\|_{\bS_1} \left( = \int_a^b q(t)R(t,t)\rd t \right).
$$
Thus, condition  \eqref{eq:sup,A_n<infty} with $p=1$ %%of Lemma \ref{lem:conv,S_1-pos,oper}
is verified and due to  \eqref{eq:K_n,to,K,in,S_2-norm}  $\lim_{n\to\infty} \|K_n - K\|=0$.
Thus, we can apply Lemma \ref{lem:conv,S_1-pos,oper} to the sequence $\{K_n\}_1^\infty$ which together with
 \eqref{eq:S_1-norm,of,K_n}--\eqref{eq:S_1-norm,of,K_n,second} yields
 %%\eqref{eq:K_n,to,K,in,S_2-norm}
 %%Besides, one gets from \eqref{eq:S_1-norm,of,K_n} and \eqref{monot,seq,phi_n} that

$$
\|K\|_{\bS_1} =  \lim_{n\to \infty} \|K_n\|_{\bS_1}  =  \lim_{n\to \infty}  \tr K_n = \int_a^b q(t)R(t,t)\rd t
= \tr K.\quad\bl
$$

\

Let us return now to the Schr\"odinger operators $L_0$ and $L_1$ as in \rf{eq:def,of,Schr,oper-s}.

\begin{thm}
Let $q\in L^1(\Bbb R)\cap L^2(\Bbb R)$
and let $\im q \ge 0$. Then the pair of maximal dissipative operators
$(L_0,L_1)$ in $L^2(\Bbb R)$  has  a real-valued spectral shift function   $\bs{\xi}$ satisfying 
{\em\rf{skvadratom}}.
%Suppose that $\{L_0,L_1\}$ is a pair of dissipative operators satisfying the hypotheses of Theorem {\em\ref{imL0}}. Assume also that %$\trace(L_1-L_0)\not\in\R$.
%Then the pair $\{L_0,L_1\}$ cannot have an integrable real-valued spectral shift function.
\end{thm}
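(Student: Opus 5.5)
We intend to deduce the statement from Theorem \ref{imL0}, applied to the operators $L_0$ and $L_1$ defined in \rf{eq:def,of,Schr,oper-s}. Throughout, $H^1$ and $H^2$ denote the Sobolev spaces on $\R$.

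\emph{Step 1: the structural hypotheses.} We compute the imaginary parts. For $f\in H^2$ we have $\im(L_0f,f)=\|Df\|^2+\|f\|^2$, so that $\im L_0=D^2+I$. Similarly $\im(L_1f,f)=\|Df\|^2+\|f\|^2+\int_\R \im q\,|f|^2\,\rd x$. Since $\im q\in L^1(\R)$ and $\im q\ge0$, this form is form-bounded with respect to $D^2$ with relative bound $0$ (one-dimensional Sobolev embedding $H^1\subset L^\be$). Hence it is closable on $H^1$, and $\im L_1=D^2+I+M_{\im q}$ in the form sense, with form domain $H^1$ and $\im L_1\ge I$.

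Consequently $\Ker\im L_j=\{\0\}$. Moreover $(\im L_j)^{\pm1/2}(D^2+I)^{\mp1/2}$ are bounded, since the form domains coincide and the forms are comparable.

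Because $q\in L^2$, the operator $M_q$ is infinitesimally $D^2$-bounded. Hence $\dom(L_1)=\dom(L_0)=H^2$ and both operators are maximal dissipative.

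Domination \rf{eq:im,L_j,sub-d,to_L_j} follows: $(\im L_j)^{1/2}$ is comparable to $(D^2+I)^{1/2}$, which is $D^2$-bounded, and $D^2(L_j+\ri I)^{-1}$ and $D^2(L_j^*-\ri I)^{-1}$ are bounded. For $j=0$ this is read off from the Fourier symbol; for $j=1$ it follows by relative boundedness of $M_q$.

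\emph{Step 2: condition \rf{eq:res,dif-ce,of,trace,cl}.} Here $L_1-L_0=M_q$. Write $q=|q|^{1/2}u|q|^{1/2}$ with $|u|=1$. Then
$$(L_1+\ri I)^{-1}-(L_0+\ri I)^{-1}=-(L_1+\ri I)^{-1}|q|^{1/2}\,u\,|q|^{1/2}(L_0+\ri I)^{-1}.$$
The operator $|q|^{1/2}(L_0+\ri I)^{-1}$ is Hilbert--Schmidt, because $|q|^{1/2}\in L^2$ and the symbol $((1+\ri)\xi^2+2\ri)^{-1}\in L^2$. The factor $(L_1+\ri I)^{-1}|q|^{1/2}$ is also Hilbert--Schmidt: it equals $(L_1+\ri I)^{-1}(D^2+I)\cdot(D^2+I)^{-1}|q|^{1/2}$, the first factor being bounded. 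A product of two Hilbert--Schmidt operators is of trace class, so \rf{eq:res,dif-ce,of,trace,cl} holds.

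\emph{Step 3: the key condition \rf{Im-1}.} We write
$$(\im L_1)^{-1/2}M_q(\im L_0)^{-1/2}=\big[(\im L_1)^{-1/2}(D^2+I)^{1/2}\big]\,(D^2+I)^{-1/2}|q|^{1/2}\,u\,|q|^{1/2}(D^2+I)^{-1/2}.$$
The bracket is bounded by Step 1. It therefore suffices to show that $A\df|q|^{1/2}(D^2+I)^{-1/2}\in\bS_2$, i.e.\ that $A^*A\in\bS_1$.

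The singular values of $A^*A=(D^2+I)^{-1/2}|q|(D^2+I)^{-1/2}$ coincide with those of $AA^*=|q|^{1/2}(D^2+I)^{-1}|q|^{1/2}$. This is the lemma on $s_j(T_1T_2)=s_j(T_2T_1)$ for self-adjoint factors, or simply $s_j(A^*A)=s_j(AA^*)$.

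The operator $AA^*$ is the integral operator with kernel
$$|q(s)|^{1/2}R(s,t)|q(t)|^{1/2},\qquad R(s,t)=\tfrac12 e^{-|s-t|}.$$
The function $R$ is continuous and positive definite, being the kernel of the positive operator $(D^2+I)^{-1}$. Moreover $\int_\R |q(x)|R(x,x)\,\rd x=\tfrac12\|q\|_{L^1}<\be$. Theorem \ref{Kuzya} then gives $AA^*\in\bS_1$, hence $A\in\bS_2$, and \rf{Im-1} follows.

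Theorem \ref{imL0} now yields a real-valued spectral shift function satisfying \rf{skvadratom}.

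The main obstacle is Step 1, specifically the rigorous identification of $\im L_1$ via closed forms and the comparability $(\im L_1)^{-1/2}(D^2+I)^{1/2}\in\mathcal B$. This needs the KLMN-type form bound for $\im q\in L^1$. It is exactly this comparability that reduces \rf{Im-1} to the free operator $D^2+I$ and so makes Theorem \ref{Kuzya} applicable.
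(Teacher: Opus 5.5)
Your proposal is correct and follows the paper's proof essentially step by step. Kato--Rellich and KLMN give $\dom(L_1)=\dom(L_1^*)=\dom(L_0)=W^{2,2}(\R)$ and the comparability of $(\im L_1)^{1/2}$ with $(D^2+I)^{1/2}$; the resolvent difference splits into two Hilbert--Schmidt factors; and \rf{Im-1} reduces to $|q|^{1/2}(D^2+I)^{-1}|q|^{1/2}\in\bS_1$ via Theorem \ref{Kuzya}, so that Theorem \ref{imL0} applies. As a side remark, the $f(x)g(D)$ Hilbert--Schmidt criterion you already use in Step 2 gives $|q|^{1/2}(D^2+I)^{-1/2}\in\bS_2$ directly, because $|q|^{1/2}\in L^2(\R)$ and $(1+\xi^2)^{-1/2}\in L^2(\R)$, so Step 3 does not actually need Theorem \ref{Kuzya}.
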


\Pf
We divide the proof in several steps.

(i) First, we observe that in accordance with the Sobolev imbedding theorem,
$W^{2,2}(\Bbb R)$ is continuously imbedded into the space $C_{\rm b}(\Bbb R)$  
of bounded continuous functions on $\R$ endowed with the sup-norm. Moreover, 
for each $\e > 0$,
there exists a constant  $b(\e) > 0$ such that
the following estimate holds:
   \begin{equation}\label{eq:q,is,str-ly,sub-d,to,D2}
 \|f\|_{C_{\rm b}(\Bbb R)}  \le \e \|f\|_{W^{2,2}} + b(\e)\|f\|_{L^2}  = \e \|D^2 f \|_{L^2} + b(\e)\|f\|_{L^2},
\quad f\in W^{2,2}(\Bbb R).
   \end{equation}

Therefore the assumption $q\in L^2(\Bbb R)$ implies that 
$
{\rm a}_{D^2}(M_q)=0
$
, i.e.,
\begin{equation}
     \label{eq:q,is,strongly,sub-d,to,Delta}
\|qf\|_{L^2} \le \|q\|_{L^2}\|f\|_{C_{\rm b}(\Bbb R)}  \le  \e_0 \|L_0 f \|_{L^2} + b(\e_0)\|f\|_{L^2},
\quad f\in \dom(L_0) = W^{2,2}(\Bbb R),
    \end{equation}
where $\e_0 = \e\|q\|_{L^2}$. This inequality means that the (dissipative) multiplication 
operator $M_q$
is dominated by $L_0$ with $L_0$-bound equal to 0, i.e., ${\rm a}_{L_0}(M_q)=0$.
Therefore, the Kato-Rellich theorem applies  and ensures that the operator  $L_1$ is closed on $\dom(L_0)$ and
    \begin{equation}\label{eq:dom,L,1=dom,L,0}
\dom(L_1) = \text\dom(L_1^*) = \dom(L_0)\ = W^{2,2}(\Bbb R).
    \end{equation}
In turn, this relation implies that  the operator $K\df (L_0 +\ri I) (L_1^* -\ri I)^{-1}$ is bounded.

%?????(ii) At this step we show that  $M_q$ is infinitesimally form subordinated
%to $L_0$ in $L^2(\Bbb R)$, i.e.,

(ii) Below we use the notation $\dom(\frak t)$ for the domain of a quadratic form $\frak t$.
At this step we consider the form $\frak t_{|q|}$,
$$
\frak t_{|q|}[f] = \big\|\sqrt{ |q|} f\big\|_{L^2}^2,  \quad
f\in \dom(\frak t_{|q|}) = \{f\in L^{2}(\R):\ \sqrt{ |q|} f \in L^{2}(\R) \}
$$
and the form $\frak t_{D^2}$,
$$
\frak t_{D^2}[f] = \|Df\|_{L^{2}(\R)}^2, \quad \dom(\frak t_{D^2}) = W^{1,2}(\Bbb R).
$$
Let us show that  for each $\varepsilon > 0$,
there exists a constant  $c(\varepsilon) > 0$ such that
the following estimate holds:
    \begin{equation}\label{eq:t_q,is,sub-d,to,t_D,first}
\frak t_{|q|}[f]  \le \e \frak t_{D^2}[f] + c(\e)\|f\|_{L^2}^2, \qquad f\in \dom(\frak t_{D^2}) = W^{1,2}(\Bbb R).
   \end{equation}

Now instead of estimate \eqref{eq:q,is,str-ly,sub-d,to,D2}, we need another
Sobolev imbedding theorem:
%%the imbedding $W^{2,2}(\Bbb R) \to C(\Bbb R)$  is continuous and
for each $\varepsilon > 0$,
there exists a constant  $c(\varepsilon) > 0$ such that
the following estimate holds:
  \begin{equation}\label{eq:t_q,is,str-ly,sub-d,to,t_D}
\|f\|_{C(\R)}^2  \le \varepsilon \|Df\|_{L^{2}(\Bbb R)}^2 + c(\varepsilon)\|f\|_{L^2(\Bbb R)}^2,
 %%= \varepsilon \|D^2 f \|_{L^2} + b(\varepsilon)\|f\|_{L^2},
\qquad f\in W^{1,2}(\Bbb R).
   \end{equation}
Using the assumption $q\in L^1(\Bbb R)$, applying estimate \eqref{eq:t_q,is,str-ly,sub-d,to,t_D},
and setting $C_q = \|q\|_{L^1(\R)}$,  we obtain
\begin{align*}
\frak t_{|q|}[f] &= \| q f^2\|_{L^1(\Bbb R)} \le \|q\|_{L^1(\Bbb R)}\|f\|_{C_{\rm b}(\Bbb R)}^2\\[.2cm]
&\le   C_q \left(\e \|Df\|_{L^{2}(\Bbb R)}^2 + c(\e)\|f\|_{L^2(\Bbb R)}^2\right), \quad f\in W^{1,2}(\Bbb R),
\end{align*}
This estimate shows that $\dom(\frak t_{|q|}) \supset \dom(\frak t_{D^2}) = W^{1,2}(\Bbb R)$ and
 proves estimate \eqref{eq:t_q,is,sub-d,to,t_D,first}.

By the KLMN theorem (\cite[Theorem X.17]{RS}), the form $\frak t_{D^2} + \frak t_{|q|}$ is semibounded below and closed on the domain
$\dom(\frak t_{D^2} + \frak t_{|q|}) = \dom(\frak t_{D^2}) = W^{1,2}(\Bbb R)$.
Moreover, the operator associated  with the form $\frak t_{D^2} + \frak t_{|q|}$  is ${D^2 + |q|I}$,  
 $\dom({D^2 + |q|I}) = W^{2,2}(\Bbb R)$.

It follows that both  operators $M_{\sqrt{ |q|}}(L_0 + \ri I)^{-1/2}$  and $M_{\sqrt{ |q|}}(L_1 +\ri I)^{-1/2}$ are bounded.

(iii)  It is easily seen (and well known) that the resolvent $(D^2 - zI)^{-1}$  of the selfadjoint operator
$D^2$ in $L^2(\Bbb R)$ admits the following integral representation
   \begin{equation}\label{eq:resol-t_of_H_on_line}
(D^2 - zI)^{-1}f = \int^{\infty}_{-\infty}G(x,t;z)f(t)\rd t, \quad f\in L^2(\Bbb R),
  \end{equation}
where  $G(x,t;z)$ is the Green's function of the Hamiltonian $D^2$,
\begin{equation}\label{eq:Green_for-la}
G(x,t;z)=
-\frac{1}{2\ri \sqrt z}\begin{cases}
e^{\ri(x-t)\sqrt z}, & \, \text{if f }t\le x,   \\
e^{-\ri(x-t)\sqrt z}, & \,  \text{if }t\ge x.
\end{cases}
\end{equation}
the function $G(x,t;z)$ is the Green's function of the Hamiltonian $D^2$.

Let us show that 
$M_q(D^2 - zI)^{-1}\in \bS_1$. To this end, we show  that the operator $R$ defined by
%$$
%R(-1) \df M_{|q|^{1/2}}(D^2 + I)^{-1}M_{|q|^{1/2}},
%$$
   \begin{align}
   \label{eq:resol-t_of_H_on_R,z=-1}
Rf &=  |q|^{1/2}(D^2 + I)^{-1}(|q|^{1/2} f)\nonumber\\[.2cm]
&=
\int^{\infty}_{-\infty} |q(x)|^{1/2}G(x,t;-1)|q(t)|^{1/2}f(t)\rd t, \quad f\in L^2(\Bbb R),
  \end{align}
belongs to $\bS_1$. First, we observe that for each  $t>0$, the (bounded) operator $(D^2+tI)^{-1}$
is  nonnegative  because so is
$D^2$. Therefore,  the operator $R$ is also nonnegative.

Next,  it follows from \eqref{eq:Green_for-la}  that
\begin{equation}\label{eq:Green_f-la,z=-1}
G(x,t;-1)=
\frac{1}{2}\begin{cases}
%\displaystyle
e^{-(x-t)}, & \, \text{if }t\le x,    \\
%{%\displaystyle
e^{(x-t)}, & \,  \text{if }t\ge x.
\end{cases}
\end{equation}
and so, $G(x,x;-1) = \frac{1}{2}$. Thus,  denoting by 
$k_R$ the kernel of the integral operator $R$ defined by
\eqref{eq:resol-t_of_H_on_R,z=-1} and keeping the assumption $q\in L^1(\Bbb R)$ in mind,
we can easily obtain 
\begin{equation}\label{eq:trace,ineq-ty}
\int^{\infty}_{-\infty}k_R(x,x)\,\rd x = \int^{\infty}_{-\infty} |q(x)|^{1/2}G(x,x;-1)|q(x)|^{1/2}\,\rd x
= \frac{1}{2}\int^{\infty}_{-\infty} |q(x)|\,\rd x < \infty.
   \end{equation}
Since the operator $R$ is  nonnegative, inequality \eqref{eq:trace,ineq-ty}  implies by Theorem \ref{Kuzya}
the required membership  of $R= M_{|q|^{1/2}}(D^2 + I)^{-1}M_{|q|^{1/2}}$ in $\bS_1$.

Similarly, we get from \eqref{eq:Green_f-la,z=-1} combined with the assumption   $q\in L^1(\Bbb R)$
that the operator $M_{|q|^{1/2}}(D^2 + I)^{-1}$ belong to the Hilbert-Schmidt
class $\bS_2$.

(iv)  
Next, since $\im q\ge 0$, we can  select a branch of the square  root of $q$
such that  $\re \sqrt {q(x)}>0$. Therefore,   
$$
R_0 \df M_{q^{1/2}}(L_0 + \ri I)^{-1} = M_{\vk|q|^{1/2}}(L_0 + i I)^{-1} \in \bS_2,
$$
 where $\vk(x) = q^{1/2}(x)/ |q^{1/2}(x)|$,\  $|\vk(x)|=1$.
 
 Next, we have the factorization
  \begin{equation}\label{eq:factor-n,of,res,dif-ce,new}
(L_1 + \ri I)^{-1}-(L_0 + \ri I)^{-1} = - \left(M_{q^{1/2}}(L_1^* -  \ri I)^{-1}\right)^* \left( M_{q^{1/2}}(L_0 + \ri I)^{-1}\right),
    \end{equation}
with both factors bounded.

Keeping in mind that the operator $K =\left((L_0 + \ri I)(L_1^* -  \ri I)^{-1}\right)$ is bounded (step (i)), we obtain
\begin{align*}
R_1 \df M_{q^{1/2}}(L_1^* -  \ri I)^{-1} 
&= (M_{q^{1/2}}(L_0 + \ri I)^{-1})\left((L_0 + \ri I)(L_1^* -  \ri I)^{-1}\right)\\[.2cm]
&= (M_{q^{1/2}}(L_0 + \ri I)^{-1})K \in \bS_2.
%%= R_0\left((L_0 + i I)(L_1^* -  i I)^{-1}\right)
\end{align*}

Combining both inclusions $R_j\in \bS_2$, $j=0,1$,  with identity \eqref{eq:factor-n,of,res,dif-ce,new}, we obtain
    \begin{equation}\label{eq:4.3res,dif-ce,is,tr,cl}
(L_1 + \ri I)^{-1}-(L_0 + \ri I)^{-1}= R_1^*R_0\in \bS_1.
    \end{equation}
 Thus, condition \eqref{eq:res,dif-ce,of,trace,cl} of Theorem \ref{imL0} is verified.

(v)
Next, it follows from  \eqref{eq:def,of,Schr,oper-s}  that
$$
\im L_0 = D^2  +  I \ge I  \qquad \text{ and} \qquad \im L_1= D^2  + I + \im q \ge I
$$
and due to  \eqref{eq:dom,L,1=dom,L,0}  $\text{dom} \im L_1 = \text{dom} \im L_0 $ and
since both operators $\im L_0, \  \im L_1$ are positive,  this implies that
$\text{dom} (\im L_1)^{1/2} = \text{dom} (\im L_0)^{1/2}$  (see e.g., \cite{RS}, Theorem X.18 and \cite{Sch}, Ex. 10.8.7).
In turn, this equality ensures  the existence  of a bounded  operator $K_1$
with bounded inverse  $K_1^{-1}$  that satisfies the condition
  $(\im L_1)^{-1/2}  = K_1  (\im L_0)^{-1/2}$.

On the other hand, it follows from \eqref{eq:def,of,Schr,oper-s} that $L_1 - L_0 = M_q$.  Inserting in \eqref{Im-1} this equality together with 
  the previous identity turns it into the following one
    \begin{equation}
    \label{Im-1,for,Schrod}
(\im L_1)^{-1/2}  M_q  (\im L_0)^{-1/2} =  K_1(\im L_0)^{-1/2}  M_q  (\im L_0)^{-1/2} \in \bS_1.
   \end{equation}
which we have to verify this  condition.
Since $K_1$ is invertible, 
%$K^{-1}\in \mathcal B(\frak H)$, 
condition \eqref{Im-1,for,Schrod}
is equivalent to the fact that
\begin{multline*}
\left(M_{|q|^{1/2}}(\im L_0)^{-1/2}\right)^* \left(M_{|q|^{1/2}}(\im L_0)^{-1/2}\right) \\[.2cm]
= \clos{\left((\im L_0)^{-1/2}M_{|q|^{1/2}}\right)} \left(M_{|q|^{1/2}}(\im L_0)^{-1/2}\right)   \in \bS_1,
\end{multline*}
where we use the notation $\clos A$ for the closure of a closable operator $A$.
In turn, this condition is equivalent to the condition
    \begin{equation}\label{Im-1,for,Schrod,2}
   R=\left(M_{|q|^{1/2}}(\im L_0)^{-1/2}\right)    \clos{\left((\im L_0)^{-1/2}M_{|q|^{1/2}}\right)}  \in \bS_1
   \end{equation}
  which follows immediately from the membership
 $R\in \bS_1$ established at step (iii). $\bl$

\

\

 \begin{footnotesize}

\noindent
\begin{tabular}{p{7cm}p{15cm}}
M.M. Malamud & V.V. Peller \\
Petersburg State University & St.Petersburg State University\\
 Universitetskaya nab., 7/9 &Universitetskaya nab., 7/9 \\
199034 St.Petersburg, Russia& 199034 St.Petersburg, Russia\\
email: malamud3m@gmail.com

\\
&St.Petersburg Department\\
&Steklov Institute of Mathematics\\
&Russian Academy of Sciences\\
&Fontanka 27, 191023 St.Petersburg\\
&Russia\\
& email: peller@math.msu.edu
\end{tabular}

\end{footnotesize}

\end{document}